\newtheorem{theorem}{Theorem}[section]
\newtheorem{lemma}[theorem]{Lemma}
\newtheorem{corollary}[theorem]{Corollary}
\newtheorem{proposition}[theorem]{Proposition}
\theoremstyle{definition}
\newtheorem{definition}[theorem]{Definition}
\newtheorem{remark}[theorem]{Remark}
\newtheorem{assumption}[theorem]{Assumption}
\newtheorem{example}[theorem]{Example}
\def\C{{\mathbb C}}
\def\P{{\mathbb P}}
\def\Z{{\mathbb Z}}
\def\HH{\operatorname{H\hspace{0.5pt}}}
\def\cD{{\mathcal D}}
\def\cE{{\mathcal E}}
\def\cM{{\mathcal M}}
\def\cN{{\mathcal{N}}}
\def\cO{{\mathcal{O}}}
\def\cU{{\mathcal U}}
\def\cX{{\mathcal X}}
\def\cY{{\mathcal Y}}
\def\fg{{\mathfrak g}}
\def\PGL{\operatorname{PGl}}
\def\PSP{\operatorname{PSp}}
\def\rat{\dashrightarrow}
\def\operatorname#1{\mathop{\rm #1}\nolimits}
\def\DA{{\rm A}}
\def\DB{{\rm B}}
\def\DC{{\rm C}}
\def\DD{{\rm D}}
\def\DF{{\rm F}}
\def\DG{{\rm G}}
\def\Proj{\operatorname{Proj}}
\def\Hom{\operatorname{Hom}}
\def\Pic{\operatorname{Pic}}
\def\codim{\operatorname{codim}}
\def\rk{\operatorname{rk}}
\def\deg{\operatorname{deg}}
\def\rat{\operatorname{RatCurves}}
\def\NE{{\operatorname{NE}}}
\def\Nu{{\operatorname{N_1}}}
\def\NU{{\operatorname{N^1}}}
\newcommand{\cNE}[1]{\overline{\NE(#1)}}
\def\ev{{\operatorname{ev}}}
\def\ad{\operatorname{ad}}
\newcommand{\pb}{\ar@{}[dr]|{\text{\pigpenfont J}}}
\newcommand{\ol}[1]{\overline{#1}}
\begin{document}

\title{Manifolds with two projective bundle structures}

\author[Occhetta]{Gianluca Occhetta} \address{Dipartimento di
  Matematica, Universit\`a di Trento, via Sommarive 14 I-38123 Povo di
  Trento (TN), Italy} \thanks{First and third author supported by PRIN project
  ``Geometria delle variet\`a algebriche''. Second author supported  by MIUR project FFABR, and by the Polish National Science Center project 2013/08/A/ST1/00804}
\email{gianluca.occhetta@unitn.it, eduardo.solaconde@unitn.it}

\author[Romano]{Eleonora A. Romano}
\address{Instytut Matematyki UW, Banacha 2, 02-097 Warszawa, Poland}
\email{elrom@mimuw.edu.pl}

\author[Sol\'a Conde]{Luis E. Sol\'a Conde}

\subjclass[2010]{Primary 14M99; Secondary 14J45, 14M17} 

\begin{abstract}
In this paper we classify varieties of Picard number two having two projective bundle structures of any relative dimension, under the assumption that these structures are mutually uniform. As an application we prove the Campana--Peternell conjecture for varieties of Picard number one admitting $\C^*$-actions of a certain kind.
\end{abstract}

\maketitle

\section{Introduction}\label{sec:intro}

Within the class of smooth complex  projective varieties, having a projective bundle structure is an uncommon property (see \cite{AD}).  
As a consequence, one does not expect to find many varieties of low Picard number having more than one of these structures. Besides the trivial constructions (products), the standard example is the projectivization of the tangent bundle of $\P^n$, whose second contraction is a $\P^{n-1}$-bundle over $\P^{n\vee}$. Looking at this example from the point of view of Representation Theory, one may easily construct other examples of the kind among rational homogeneous varieties. Apart of them, only one example is known, and it supports a large group of automorphism (see \cite{Kan}). 
More generally, one expects the interplay among the different structures of projective bundle to be the cause of the existence of automorphisms of the variety. This feature is well understood in the case in which these structures have relative dimension one  and there are as many  as the Picard number of the variety (\cite{OSWW,OSW}); the case of projective bundle structures of higher relative dimension is, to our best knowledge, still unexplored. 

Nevertheless, even the simplest case of varieties of Picard number two having two projective bundle structures appears naturally in  different situations. In the setup of Projective Geometry, we find them in the problem of classifying smooth projective subvarieties $X\subset \P^N$ having smooth dual (see \cite{Ein1}). 
Within Birational Geometry, they appear as the exceptional divisors of simple $K$-equivalent maps (cf. \cite{Kan2}), a class containing Mukai flops. Classification results for manifolds with two projective bundle structures could be useful in the study of this type of maps.

In this paper we will consider the problem of classifying varieties of Picard number two having two projective bundle structures which are mutually uniform. This means that the pullback of one of the structures to a line in a fiber of the other  is independent of the chosen line. The tight relation of uniformity with homogeneity is well documented in the literature (see \cite{MOS5} %\cite{VdV, EHS, MOS5, MOS6} 
and the references therein), and in our setting allows us to prove the main result of this paper (Theorem \ref{thm:main}), stating that a variety satisfying the above conditions is homogeneous. We observe here that the result is not true without the uniformity assumption: the smooth $5$-dimensional quadric admits a $\P^2$-bundle whose second contraction is a $\P^2$-bundle (cf. \cite{Kan, O}). 

Finally, in Theorem \ref{thm:drumRH}, we present an application of our result in the framework of Campana--Peternell Conjecture, which states that a smooth Fano variety with nef tangent bundle is homogeneous. Recently, Li (cf. \cite{Li}) has shown that horospherical varieties with nef tangent bundle are homogeneous; after reducing to the main case of horospherical varieties of Picard number one, the author uses the classification due to Pasquier (see \cite{Pas}), and shows that none of the non homogeneous examples has nef tangent bundle. 
Horospherical varieties of Picard number one admit a $\C^*$-action of bandwidth one (the bandwidth of an action is a measure of its complexity, see Definition \ref{def:bandwidth}). Using our main result we deduce Theorem \ref{thm:drumRH}, which states that smooth varieties of Picard number one with nef tangent bundle admitting a $\C^*$-action of bandwidth one are rational homogeneous.\par
\medskip
\noindent{\bf Outline:} We start the paper with some background material (Section \ref{sec:prelim}) on rational homogeneous varieties and bundles. We recall the definition of tag of a flag bundle on the projective line, a concept that allows us to talk about uniformity of rational homogeneous bundles. We finish the section by recalling the concept of nesting of a rational homogeneous bundle, and a result about them, 
 that we will use in our proof to detect symplectic structures on projective bundles. In Section \ref{set:main} we state our Main Theorem and present the notation we will use along its proof. We prove the Theorem in Section \ref{sec:proof_main}; starting with some preliminary results, we show in Section \ref{ssec:proof} how to construct a variety dominating our initial variety $Y$, and having as many $\P^1$-bundle structures as its Picard number. Then by  \cite[Theorem A.1]{OSW} this is a flag manifold, and the homogeneity of $Y$ follows. Finally, Section \ref{sec:drums} is devoted to the application to the Campana--Peternell conjecture mentioned above. 

%%%%%%%%%%%%%%%%%%%%%%%%%%%%%%%%%%%%%%%%%%%%%%%%%%%%%%%%%%%%%%

\section{Preliminaries}\label{sec:prelim}

Throughout the paper we will work with complex projective varieties. We denote by
$\rho_{X}:= \dim{\NU(X)}=\dim{\Nu(X)}$ the \textit{Picard number} of $X$. Given a vector bundle $\cE$ on a variety $X$, $\P(\cE)$ denotes the Grothendieck's projectivization of $\cE$, that is $\Proj(\bigoplus_{m\geq 0}S^m\cE)$.

%%%%%%%%%%%%%%%%%%%%%%%%%%%

\subsection{Notation on rational homogeneous varieties}\label{ssec:notnRH}
We introduce some notation regarding semisimple algebraic groups and their projective quotients; we refer to \cite{FuHa, Hum2} for details on this topic. Along the paper $G$ will denote a semisimple algebraic group, $B$ a {\em Borel subgroup} $B \subset G$, and $H$ a {\em Cartan subgroup} $H\subset B$ (i.e., a maximal torus contained in $B$). The torus $H$ determines a {\em root system} $\Phi$, whose {\em Weyl group} $W$ is isomorphic to the quotient ${\rm N}(H)/H$ of the normalizer ${\rm N}(H)$ of $H$ in $G$. Within $\Phi$, $B$ determines a base of positive simple roots $\Delta=\{\alpha_i,\,\, i=1,\dots,n\}$ whose associated reflections we denote by $s_i$. Let $\cD$ be the {\em Dynkin diagram} whose set of nodes is $\Delta$. When $G$ is simple, the nodes of the Dynkin diagram will be numbered as in the standard reference \cite[p.~58]{Hum3} and we will identify each node $\alpha_i\in \Delta$ with the corresponding index $i$.  

For every nonempty subset $I\subset \Delta$, denoting by $I^c$ its complement $\Delta\setminus I$, we may consider a {\em parabolic subgroup} $P(I^c)$ defined as $P(I^c):=BW(I^c)B$, where $W(I^c)\subset W$ is the subgroup of $W$ generated by the reflections $s_i$, $i\in I^c$. Quotienting by the subgroups $P(I^c)$ we obtain the {\em rational homogeneous varieties} $G/P(I^c)$ (cf. \cite[\S~23.3]{FuHa}), so that an inclusion $I\subset J\subset \Delta$ provides a smooth contraction $G/P(J^c)\to G/P(I^c)$. Given $I\subset\Delta$ it is standard to represent $G/P(I^c)$ by the Dynkin diagram $\cD$ marked on the nodes $I$. For this reason, and in order to have a unified notation within the class of rational homogeneous varieties, we will set:
$$
\cD(I):=G/P(I^c).
$$
The variety $G/B=\cD(\Delta)$ is called the {\em complete flag manifold} associated to $G$. 
 \subsection{Generalities on flag bundles}
\begin{definition} Let $X$ be a complex smooth projective variety, and $G$ be a semisimple group with Dynkin diagram $\cD$. A {\em rational homogeneous bundle} (or $G/P$-bundle) $p:Y\to X$ is a smooth morphism whose fibers are isomorphic to $G/P$, where $P$ is a parabolic subgroup of $G$. If $P=B$ is a Borel subgroup of $G$, we  say that $p$ is a {\em flag bundle} of type $\cD$ (also called $\cD$-bundle, or $G/B$-bundle).  
\end{definition}

Throughout the paper we will be mostly interested in the case in which $X$ is rationally connected and, in particular, simply connected.  
It then follows that a $G/P$-bundle $p:Y\to X$ is completely determined by a cocycle $\theta\in \HH^1(X,G_{\ad})$, where $G_{\ad}$ is the adjoint group of the Lie algebra $\fg$ of $G$. Since $G/P$ can be seen as a quotient of $G_{\ad}$, it is harmless to assume that $G=G_{\ad}$ (see \cite[Section~2]{MOS5} for details). 
Moreover, if   $E\to X$ is the $G$-principal bundle defined by the cocycle $\theta$, the $G/P$-bundle $p:Y\to X$ will be isomorphic to:  
$$E\times^GG/P:=(E\times G/P)/\sim,\qquad (e,gP)\sim(eh,h^{-1}gP),\,\,\forall h\in G.$$ 
In a similar way, for every $I\subset \Delta$ we construct a $\cD(I)$-bundle denoted by $\pi_I:\cY(I):=E\times^G \cD(I)\to X$. Note that this notation is slightly different from the one used in \cite{MOS5}, where the $\cD(I)$-bundle $\cY(I)$ is denoted by $\cY_{\Delta\setminus I}$.

In particular, setting $\pi:=\pi_\Delta$, $\cY:=\cY(\Delta)$, we get a flag bundle $\pi\colon\cY\to X$, whose projection factors by $\pi_I$, for every $I\subset \Delta$: 
$$
\xymatrix{\cY\ar@/^1pc/[rr]^{\pi}\ar[r]_{\rho_{I^c}}&\cY(I)\ar[r]_{\pi_I}&X}
$$
Note that $\rho_{I^c}$ is a flag bundle over $\cY(I)$, with fibers isomorphic to $\cD_{I^c}(I^c)$, where $\cD_{I^c}$ is the Dynkin subdiagram of $\cD$ supported on the set of nodes ${I^c}$.

\begin{definition}\label{def:reduce}
Given a subgroup $G'\subset G$, and a flag bundle $\pi:\cY\to X$ as above, we say that $\pi$ {\em reduces to }$G'$ if the cocycle $\theta\in \HH^1(X,G)$ defining $\pi$ belongs to the image of the natural map $\HH^1(X,G')\to \HH^1(X,G)$.
\end{definition}

We will be mostly interested in the case in which $G'$ is semisimple, so that $\theta$ defines a flag bundle $\pi':\cY'\to X$, where $\cY'$ is contained in $\cY$. 

\begin{example}\label{ex:C_structure}
In the case in which $\pi$ is a flag bundle of type $\DA_r$, with $r$ odd, saying that $\pi$ reduces to $\PSP(r+1)\subset\PGL(r+1)$ is equivalent to say that the $\P^r$-bundle $\pi_1:\cY(1)\to X$ supports a relative contact structure, so that $\cY'$ is the subvariety $\cY'\subset \cY$ parametrizing flags in the $\P^r$'s, isotropic with respect to the contact structure. In this case $\cY'$ is a flag bundle of type $\DC_{(r+1)/2}$.
\end{example}

\begin{definition}\label{def:diag}
A flag bundle $\pi:\cY\to X$ is called {\em diagonalizable} if its defining cocycle $\theta\in \HH^1(X,G)$ belongs to the image of the natural map $\HH^1(X,H)\to \HH^1(X,G)$, where $H\subset G$ denotes a Cartan subgroup. 
\end{definition}

The next statement provides a geometric criterion for the diagonalizability of flag bundles (cf. \cite[Corollary 3.10]{MOS5}). 

\begin{proposition}\label{prop:sect_diag}
Let $\pi:\cY\to X$ be a flag bundle, and assume that $X$ is a Fano manifold of Picard number one. Then  $\pi$ is diagonalizable if and only if it admits a section.
\end{proposition}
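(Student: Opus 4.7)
If $\pi$ is diagonalizable then $\cY \iso E_H\times^H G/B$ for some principal $H$-bundle $E_H$. Because $H\subset B$, the coset $eB\in G/B$ is fixed by the $H$-action, so the assignment $x\mapsto [(\widetilde x, eB)]$, for any lift $\widetilde x$ of $x$ to $E_H$, yields a well-defined section of $\pi$. (Every $w\in W$ analogously produces a section from the $H$-fixed point $wB\in G/B$, so a diagonalizable flag bundle actually carries $|W|$ distinguished sections.)

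\textbf{Converse, set-up.} Assume $\pi$ admits a section $s$. Via the standard correspondence ``sections of $G/B$-bundles equal $B$-reductions'', $s$ provides a principal $B$-subbundle $E_B\subset E$ with $s(x)=[(e,eB)]$ for any $e$ in the fibre of $E_B$ over $x$ (where $E$ is the $G$-principal bundle defining $\pi$). The task becomes to further reduce $E_B$ to a principal $H$-bundle. Using the Levi decomposition $B=H\ltimes U$, I would set $E_H:=E_B/U$, a principal $H$-bundle on $X$, and study the natural map $E_H\times^H B\to E_B$ induced by $H\hookrightarrow B$. Its failure to be an isomorphism is measured by a class in the non-abelian cohomology $\HH^1(X, U^{E_H})$, where $U^{E_H}$ is the twist of $U$ under the adjoint action of $H$ via $E_H$.

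\textbf{Reduction to line-bundle cohomology.} Choose an $H$-stable filtration of $U$ by normal subgroups $U=U_0\supset U_1\supset\cdots\supset U_N=\{1\}$ with quotients $U_i/U_{i+1}\iso \G_a$ on which $H$ acts through a root character $\alpha_i$ (such a filtration exists because $U$ is unipotent and the root spaces of $\fg$ are $H$-stable). The twist by $E_H$ identifies each $U_i/U_{i+1}$ with the line bundle $L_{\alpha_i}:=E_H\times^H\C_{\alpha_i}$ on $X$, and the iterated obstructions to lifting $E_B$ to $E_H\times^H B$ lie in $\HH^1(X, L_{\alpha_i})$. Since $\rho_X=1$, each $L_{\alpha_i}\iso \cO_X(d_i)$ for some $d_i\in\Z$. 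Kodaira vanishing applied to the Fano manifold $X$ gives $\HH^j(X,\cO_X)=0$ for $j>0$, together with $\HH^1(X,\cO_X(d))=0$ for $d>-i_X$ (where $i_X$ denotes the Fano index); Serre duality dispatches the remaining range $d\leq -i_X$ when $\dim X\geq 2$, while the case $\dim X=1$ forces $X=\P^1$, where every flag bundle is diagonalizable. Altogether $\HH^1(X,\cO_X(d))=0$ for all $d\in\Z$, so the obstructions vanish, $E_B$ reduces to $H$, and $\pi$ is diagonalizable.

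\textbf{Main obstacle.} The principal technical point I expect is the iterative lifting: one has to verify that the obstruction at each step really linearises into an \emph{abelian} class in $\HH^1(X, L_{\alpha_i})$ -- i.e., that the $H$-stable filtration of $U$ behaves well with respect to non-abelian cohomology -- and that successive liftings glue into a global reduction of $E_B$ to $H$. Once this linearisation is justified, the vanishing above is an immediate consequence of the Picard rank one hypothesis combined with Kodaira vanishing on the Fano variety $X$.
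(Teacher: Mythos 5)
Your proof is correct and follows essentially the same route as the paper's source: the paper gives no proof of this proposition, quoting it from \cite[Corollary 3.10]{MOS5}, and the argument there is precisely your d\'evissage --- a section yields a $B$-reduction, the unipotent radical $U$ is filtered by $H$-stable normal subgroups with $\G_a$-quotients that twist into line bundles $L_{\alpha_i}$, and $\HH^1(X,\cO_X(d))=0$ for every $d$ on a Fano manifold of Picard number one and dimension at least two (Kodaira vanishing plus Serre duality), with the case $X=\P^1$ covered by Grothendieck's theorem. The only cosmetic slip is the phrase ``natural map $E_H\times^H B\to E_B$'' (no such map exists canonically; the question is whether \emph{some} $H$-reduction of $E_B$ exists), but your subsequent filtration-and-torsor-lifting argument never uses it, so nothing is affected.
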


\subsection{Flag bundles on $\P^1$}\label{ssec:flagP1}
The case of flag bundles on the projective line $\P^1$ is particularly simple. In fact, following \cite[Section 4]{MOS5}, any $G/B$-bundle ${\pi}\colon \cY\to \mathbb{P}^{1}$ is completely determined by the  Dynkin diagram $\cD$ of $G$, together with a map $\delta:\Delta\to \Z$, sending $i$ to  $d_i=K_i\cdot \Gamma_0$, being $K_i$ the relative canonical of the elementary contraction $\rho_i\colon \cY\to \cY(i^c):=\cY(\{i\}^c)$ with $i\in \Delta$, and $\Gamma_0$ a minimal section of ${\pi}$, i.e., a section without deformations with a point fixed. Such $\delta$ is called \textit{tag of ${\pi}$}. When an ordering on $\Delta$ is given, we will denote $\delta$ by the $n$-tuple $(d_1,\dots,d_n)$. 

\begin{example}
In the case of a $G/B$-bundle of type $\DA_r$ over $\P^1$, obtained as the complete flag bundle associated to a vector bundle $\bigoplus_{i=0}^r\cO_{\P^1}(a_i)$ with $a_0\leq \dots\leq a_r$, the tag of the $G/B$-bundle is $(a_1-a_0,\dots,a_r-a_{r-1})$, when ordering the nodes of $\DA_r$ from left to right:
%%%%%%%%%%%%%%%%%%%%%% DIAGRAM
$$
\ifx\du\undefined
  \newlength{\du}
\fi
\setlength{\du}{4\unitlength}
\begin{tikzpicture}
\pgftransformxscale{1.000000}
\pgftransformyscale{1.000000}

%%%%%% COLORS
\definecolor{dialinecolor}{rgb}{0.000000, 0.000000, 0.000000} % EXTERIOR
\pgfsetstrokecolor{dialinecolor}
\definecolor{dialinecolor}{rgb}{0.000000, 0.000000, 0.000000} % INTERIOR
\pgfsetfillcolor{dialinecolor}

%%%%%% NODES

\pgfsetlinewidth{0.300000\du}
\pgfsetdash{}{0pt}
\pgfsetdash{}{0pt}

%%% #1
\pgfpathellipse{\pgfpoint{-6\du}{0\du}}{\pgfpoint{1\du}{0\du}}{\pgfpoint{0\du}{1\du}}
\pgfusepath{stroke}
\node at (-6\du,0\du){};
%\pgfpathellipse{\pgfpoint{-6\du}{0\du}}{\pgfpoint{1\du}{0\du}}{\pgfpoint{0\du}{1\du}}
%\pgfusepath{fill}
%\node at (-6\du,0\du){};

%%% #2
\pgfpathellipse{\pgfpoint{6\du}{0\du}}{\pgfpoint{1\du}{0\du}}{\pgfpoint{0\du}{1\du}}
\pgfusepath{stroke}
\node at (6\du,0\du){};
%\pgfpathellipse{\pgfpoint{4\du}{0\du}}{\pgfpoint{1\du}{0\du}}{\pgfpoint{0\du}{1\du}}
%\pgfusepath{fill}
%\node at (4\du,0\du){};

%%% #3
\pgfpathellipse{\pgfpoint{18\du}{0\du}}{\pgfpoint{1\du}{0\du}}{\pgfpoint{0\du}{1\du}}
\pgfusepath{stroke}
\node at (18\du,0\du){};
%\pgfpathellipse{\pgfpoint{14\du}{0\du}}{\pgfpoint{1\du}{0\du}}{\pgfpoint{0\du}{1\du}}
%\pgfusepath{fill}
%\node at (14\du,0\du){};

%%% #4
\pgfpathellipse{\pgfpoint{30\du}{0\du}}{\pgfpoint{1\du}{0\du}}{\pgfpoint{0\du}{1\du}}
\pgfusepath{stroke}
\node at (30\du,0\du){};
%\pgfpathellipse{\pgfpoint{24\du}{0\du}}{\pgfpoint{1\du}{0\du}}{\pgfpoint{0\du}{1\du}}
%\pgfusepath{fill}
%\node at (24\du,0\du){};

%%% #5
\pgfpathellipse{\pgfpoint{42\du}{0\du}}{\pgfpoint{1\du}{0\du}}{\pgfpoint{0\du}{1\du}}
\pgfusepath{stroke}
\node at (42\du,0\du){};

%%%%%% LINKS
\pgfsetlinewidth{0.300000\du}
\pgfsetdash{}{0pt}
\pgfsetdash{}{0pt}
\pgfsetbuttcap

{\draw (-5\du,0\du)--(5\du,0\du);}
{\draw (7\du,0\du)--(17\du,0\du);}
{\draw (31\du,0\du)--(41\du,0\du);}

%%%%%% DASHED LINK
\pgfsetlinewidth{0.400000\du}
\pgfsetdash{{1.000000\du}{1.000000\du}}{0\du}
\pgfsetdash{{1.000000\du}{1.000000\du}}{0\du}
\pgfsetbuttcap
{\draw (19.3\du,0\du)--(29\du,0\du);}

%%%%%% TAGS
\node[anchor=west] at (48\du,0\du){${\rm A}_r$};

\node[anchor=south] at (-6\du,1.1\du){$\scriptstyle a_1-a_0$};

\node[anchor=south] at (6\du,1.1\du){$\scriptstyle a_2-a_1$};

\node[anchor=south] at (18\du,1.1\du){$\scriptstyle a_3-a_2$};

\node[anchor=south] at (30\du,0.95\du){$\scriptstyle a_{r-1}-a_{r-2}$};

\node[anchor=south] at (44\du,0.95\du){$\scriptstyle a_r-a_{r-1}$};

\end{tikzpicture} 
$$
%%%%%%%%%%%%%%%%%%%%%%
\end{example}

Later on we will make use of the following two statements:

\begin{lemma}\label{lem:subtag}
Let $\pi:\cY \to \P^1$ be a $\cD$-bundle, with tag $\delta: \Delta \to \Z$. Let $s: \P^1 \to \cY(I)$ be a minimal section.
The tag of the $\cD_{I^c}$-bundle $s^*\cY:=\cY\times_{\cY(I)} \P^1$ is  $\delta|_{I^c}$.
\end{lemma}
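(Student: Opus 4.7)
The plan is to reduce the tag computation on $s^*\cY$ to that on $\cY$ by exhibiting a minimal section of $s^*\cY\to\P^1$ which is simultaneously a minimal section of $\pi$, and then invoking base change for the elementary contractions indexed by nodes in $I^c$.

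Pick any minimal section $\widetilde\Gamma_0$ of the flag bundle $s^*\cY\to\P^1$. The first step is the claim that, viewed as a curve in $\cY$ through the natural inclusion $s^*\cY\hookrightarrow\cY$, $\widetilde\Gamma_0$ is also a minimal section of $\pi$. Indeed, any one-parameter family $\{\Gamma_t\}$ of sections of $\pi$ deforming $\widetilde\Gamma_0$ and fixing a point $p\in\widetilde\Gamma_0$ projects via $\rho_{I^c}$ to a family of sections of $\pi_I$ deforming $s=\rho_{I^c}(\widetilde\Gamma_0)$ and fixing $\rho_{I^c}(p)$. Minimality of $s$ forces this projected family to be constant, so every $\Gamma_t$ factors through $s^*\cY$; then minimality of $\widetilde\Gamma_0$ within $s^*\cY\to\P^1$ forces $\Gamma_t=\widetilde\Gamma_0$ for all $t$.

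For $i\in I^c$ we have $I\subset i^c$, so the elementary contraction $\rho_i\colon\cY\to\cY(i^c)$ factors through $\cY(I)$. Its base change along $s$ is precisely the elementary contraction $\widetilde\rho_i\colon s^*\cY\to s^*\cY(i^c)$ of the $\cD_{I^c}$-bundle $s^*\cY\to\P^1$ associated with the node $i$, and flat base change identifies the relative canonical $\widetilde K_i$ of $\widetilde\rho_i$ with the restriction of $K_i$. Therefore
$$
\widetilde K_i\cdot\widetilde\Gamma_0 \;=\; K_i\cdot\widetilde\Gamma_0 \;=\; \delta(i),
$$
the last equality coming from the minimality claim together with the definition of the tag $\delta$. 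This identifies the tag of $s^*\cY\to\P^1$ at $i\in I^c$ with $\delta|_{I^c}(i)$, as required.

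The only non-formal input is the well-definedness of the tag independently of the chosen minimal section, a point addressed within the framework recalled in Subsection \ref{ssec:flagP1}; modulo this, the argument combines the inductive structure of minimal sections on flag bundles over $\P^1$ with flat base change, and no delicate computation is required.
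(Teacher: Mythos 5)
Your proposal is correct and follows essentially the same route as the paper's own proof: show that a minimal section of $s^*\cY\to\P^1$ maps to a minimal section of $\pi\colon\cY\to\P^1$, then identify the relative canonicals $K_i$, $i\in I^c$, of the elementary contractions of $s^*\cY$ over $\P^1$ with the pullbacks of those of $\cY$, and intersect with the minimal section. You in fact give more detail than the paper (which simply asserts the minimality transfer, while you justify it via projecting deformations by $\rho_{I^c}$); the only quibble is a wording slip where you say $\rho_i\colon\cY\to\cY(i^c)$ ``factors through $\cY(I)$'' --- the correct statement, which is what your base-change step actually uses, is that $\rho_{I^c}$ factors through $\cY(i^c)$, i.e., that $\rho_i$ is a morphism over $\cY(I)$.
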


\begin{proof}
A minimal section of $s^*\cY$ over $\P^1$ maps via the natural inclusion $\iota:s^*\cY\hookrightarrow \cY$ to a minimal section of $\cY$ over $\P^1$. The tag of $s^*\cY$ is given by the intersections of the relative canonical line bundles $K_i$, $i\in I^c$, of the elementary contractions of $s^*\cY$ over $\P^1$. Since these relative canonicals are the pullbacks via $\iota$ of the corresponding relative canonicals of $\cY$ over $\P^1$, the result follows.
\end{proof}

\begin{lemma}\label{lem:minimals}
Let $\pi: \cY\to \P^1$ and $\pi':\cY' \to \P^1$ be flag bundles, with Dynkin diagrams $\cD$ and $\cD'$, having nodes indexed by $\Delta$ and $\Delta'$, and tags $\delta$, $\delta'$.
Set $I_0:=\{i \in \Delta~|~ \delta(i)=0\}$, $I'_0:=\{i \in \Delta'~|~ \delta'(i)=0\}$, $N':=I'^c_0$, and let $J$ be a subset of $I_0$. Assume that there exists a commutative diagram
$$ \xymatrix@C=20pt@R=10pt{
\cY(J) \ar[rd]_{\pi_{J}} \ar[rr]^{s} & &\cY'(N') \ar[ld]^{\pi'_{N'}}\\
 &\P^1 &}$$
such that the image of a minimal section of $\pi_{J}$ is the minimal section of $\pi'_{N'}$. Then $s$ is relatively constant.
\end{lemma}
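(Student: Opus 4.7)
The plan is to exploit rigidity: the hypothesis forces $\sigma'$ to have no nontrivial deformations as a section of $\pi'_{N'}$, while the minimal section $\sigma$ admits deformations whose composition with $s$ must therefore remain equal to $\sigma'$. Iterating this propagates relative constancy of $s$ across the whole fiber of $\pi_{J}$.

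By Proposition~\ref{prop:sect_diag}, both $\pi$ and $\pi'$ are diagonalizable since they are flag bundles on $\P^{1}$ and admit sections. A root-space decomposition together with Lemma~\ref{lem:subtag} then yields a splitting $\sigma'^{*}T_{\pi'_{N'}}=\bigoplus_{\alpha'}\cO_{\P^{1}}(-\delta'(\alpha'))$ indexed by the positive roots $\alpha'$ of $\cD'$ whose simple-root support meets $N'$, where $\delta'(\alpha')$ denotes the linear evaluation of $\delta'$ on $\alpha'$. Since $\delta'(i)>0$ for every $i\in N'$, every summand has strictly negative degree, so $\HH^{0}(\P^{1},\sigma'^{*}T_{\pi'_{N'}})=0$ and $\sigma'$ is rigid. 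Symmetrically, the inclusion $J\subset I_{0}$ produces trivial $\cO_{\P^{1}}$-summands in $\sigma^{*}T_{\pi_{J}}$, one for each positive root supported in $J$; in particular, $\sigma$ has a deformation $\{\sigma_{u}\}$ in the direction of each simple root $\alpha_{j}$ with $j\in J$.

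Composing any deformation $\sigma_{u}$ with $s$ produces a deformation of $\sigma'$, which by rigidity must equal $\sigma'$. Fixing $j\in J$ and integrating the one-parameter family associated to the $\alpha_{j}$-summand shows that $\sigma_{u}(t)$ traces out the $\P^{1}$-fiber of $\rho_{j}'\colon\cY(J)\to\cY(J\setminus\{j\})$ through $\sigma(t)$ as $u$ varies, so $s$ is constant along this fiber. Thus $s$ is constant on every $\rho_{j}'$-fiber meeting $\sigma$, for every $j\in J$. To reach an arbitrary point $y\in\cY(J)_{t}$, connect it to $\sigma(t)$ inside the rationally chain-connected fiber $\cD(J)$ by a chain of extremal $\alpha_{j}$-lines with $j\in J$. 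The first link lies in a $\rho_{j}'$-fiber meeting $\sigma$, so $s$ is constant there; its endpoint lies on a deformation $\sigma_{u}$ of $\sigma$ that again maps to $\sigma'$, so one can repeat the argument with $\sigma_{u}$ replacing $\sigma$ along the next link of the chain. Iterating gives $s(y)=\sigma'(t)$, i.e.\ $s=\sigma'\circ\pi_{J}$.

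The main difficulty is this iterative propagation, which requires verifying at each step that the moving minimal section passes through the desired point of the next $\alpha_{j}$-fiber. That this is possible rests on the explicit description of the deformation space $\HH^{0}(\P^{1},\sigma_{u}^{*}T_{\pi_{J}})$ via the diagonalization: the $\cO_{\P^{1}}$-summand for $\alpha_{j}$ parametrizes motion of $\sigma_{u}(t)$ along the $\alpha_{j}$-$\P^{1}$-fiber through it, so any point of that fiber can be hit by a suitable deformation.
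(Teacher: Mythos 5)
Your rigidity analysis of $\sigma'$ (strictly negative degrees on every root summand whose support meets $N'$) and the existence of degree-zero deformations of $\sigma$ are both sound, and they parallel the facts underlying the paper's proof (uniqueness of the minimal section of $\pi'_{N'}$, and the product structure $\rho_k^{-1}(\rho_k(\Gamma))\simeq\Gamma\times\Gamma_k$ over a minimal section). But your propagation step has a genuine gap: deformations of a minimal section move only in the degree-zero root directions, so the sections $\sigma_u$ sweep out only the image of the trivial factor, a $\cD_{I_0}$-homogeneous subvariety which is in general a \emph{proper} subvariety of each fiber of $\pi_J$. Not every point of a fiber lies on a minimal section, so the iteration stalls. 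Concretely, for a $\DA_2$-bundle with tag $(0,d)$, $d>0$, and $J=I_0=\{1\}$: the fibers of $\pi_J$ are $\P^2$'s, the minimal sections sweep only a line $L_t$ in each, and a line of $\P^2_t$ through a point of $L_t$ but not contained in $L_t$ consists (off $L_t$) of points through which no minimal section passes; your claim that ``any point of that fiber can be hit by a suitable deformation'' fails there, and in fact the explicit description of $\HH^0(\P^1,\sigma_u^*T_{\pi_J})$ that you invoke shows the opposite, since the negative-degree summands contribute no global sections. (The same example exposes a smaller inaccuracy: the elementary contractions of $\cY(J)$ over $\P^1$ need not be $\P^1$-bundles --- here $\rho_j'$ is $\pi_J$ itself, with $\P^2$-fibers --- so ``the $\P^1$-fiber of $\rho_j'$'' is not well defined on the partial flag bundle.)

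The missing idea --- and the paper's actual mechanism --- is numerical rather than deformation-theoretic. Lift to the complete flag bundle $\cY$, where each $\rho_k$, $k\in J$, \emph{is} a smooth $\P^1$-fibration, so all its fibers $\Gamma_k$ are numerically equivalent. Over a minimal section $\Gamma$, the condition $\delta(k)=0$ gives the product surface $\Gamma\times\Gamma_k$ whose horizontal rulings are minimal sections; since $s$ sends minimal sections to the unique minimal section of $\pi'_{N'}$, the composition $s\circ\rho_{J^c}$ maps this whole surface to that section and hence contracts one curve $\Gamma_k$. Now, a morphism contracting one curve contracts every curve numerically proportional to it (pull back an ample class), so \emph{all} fibers of $\rho_k$, $k\in J$, are contracted --- including those through points lying on no minimal section, which is exactly where your chain argument breaks down. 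As these classes, together with the fibers of $\rho_{J^c}$, generate the classes of curves in the fibers of $\pi_J\circ\rho_{J^c}$, relative constancy of $s$ follows. Your argument can be repaired along the same lines: your flow along the $\alpha_j$-direction does exhibit, inside the minimal locus, one contracted line of each class $j\in J$; conclude from there by numerical equivalence instead of by chaining deformations.
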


\begin{proof} 
It is enough to prove that the composition of $s$ with the natural projection $\rho_{J^c}:\cY\to \cY(J)$ is relatively constant over $\P^1$. Denoting by $\Gamma_k$ the fibers of the elementary contractions $\rho_k$ of $\cY$ over $\P^1$, we are left to show that $s\circ\rho_{J^c}$ contracts all the curves $\Gamma_k$, $k\in J$. 

Given $k\in J$, denoting by $\Gamma$ a minimal section of $\pi:\cY\to \P^1$, the hypothesis $\delta(k) = 0$ tells us that $\rho_k^{-1}(\rho_k(\Gamma))\simeq \Gamma\times \Gamma_k\subset \cY$, and the fibers of the projection $\Gamma\times \Gamma_k\to \Gamma_k$ are minimal sections of $\cY$ over $\P^1$. Since we are assuming that $s$ maps minimal sections of $\pi_{J}$ to the minimal section of $\pi'_{N'}$, it follows that $\rho_k^{-1}(\rho_k(\Gamma))$ gets contracted by $s\circ\rho_{J^c}$ to the unique minimal section of $\pi'_{N'}:\cY'(N')\to\P^1$. In other words, $s\circ\rho_{J^c}$ contracts $\Gamma_k$ to a point.
\end{proof}

%%%%%%%%%%%%%%%%%%%%%%%%%%%

\subsection{Uniform flag bundles}

In this section we focus on another property related to flag bundles, which is the \textit{uniformity}. We refer to \cite[Section 5]{MOS5} for further details. 
 A {\it family of rational curves} $\cM$ on $X$ is an irreducible  subvariety  $\cM \subset \rat^n(X)$ with universal family $p\colon \cU\to \cM$, and evaluation morphism $\ev\colon \cU\to X$. Note that we do not require $\cM$ to be a {\em complete family}, i.e.,  an irreducible component of $\rat^n(X)$.
A family of rational curves is called {\it dominating} if the evaluation $\ev$ is dominating, and {\it unsplit} if $\cM$ is proper. 

If $\cM$ is a family of rational curves as above, and $\pi\colon \cY\to X$ is a $G/B$-bundle, then the fiber product ${\ev}^{*}\cY=\cY\times_{X} \cU$ has a natural structure of $G/B$-bundle over $\cU$ (cf. \cite[Section 2.1]{MOS5}), which by abuse we continue to denote by $\pi$:
$$ \xymatrix@C=20pt{
{\ev}^{*}\cY\ar[d]_\pi \ar[r]^{}&{\cY}\ar[d]_{\pi}\\
\cU \ar[r]_q & X}$$

For every rational curve $C=p^{-1}(z)\subset \cU$ the pullback of the $G/B$-bundle ${\ev}^{*}\cY$ to $C$ is determined by its tag $\delta_{C}(\cY)=(d_1,\dots,d_n)$ on the Dynkin diagram of the group $G$. Hence it make sense to give the following definition:

\begin{definition} Let $X$ be a complex smooth projective variety with a dominating family $\cM$ of rational curves. We say that a $G/B$-bundle $\pi\colon \cY\to X$ is {\em 	uniform with respect to} $\cM$ if the tag $\delta_{C}(\cY)$ does not depend on the chosen curve $C\in \cM$. In this case, the tag of $\pi$ will be denoted by $\delta(\cY)$ or simply by $\delta$. Moreover, a rational homogeneous bundle $Y\to X$  is said to be uniform with respect to $\cM$ if the associated complete flag bundle is uniform.
\end{definition}

The following statement characterizes trivial flag bundles by their tags:

\begin{theorem}[{\cite[Theorem 5.5]{MOS5}}]\label{thm:trivial}
Let $X$ be a manifold which is rationally chain connected with respect to  $\cM_1, \dots,\cM_s$, unsplit families of rational curves, and $\pi:\cY\to X$ a $G/B$ bundle over $X$. 
Assume that for every rational curve $C_i\in\cM_i$ we have $\delta_{C_i}(\cY)=(0, \dots, 0)$.
Then $ \cY\cong X\times G/B$ is trivial.
\end{theorem}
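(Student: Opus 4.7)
My plan is to deduce triviality of $\cY$ by propagating the pointwise triviality of the restrictions $\cY|_{C_i}$ throughout $X$ using the rational chain connectedness hypothesis. The strategy has three steps: interpret the tag hypothesis as triviality on each curve; descend this triviality along each universal family of curves; and glue the descents along chains.

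First, since the tag determines a flag bundle over $\P^1$ up to isomorphism (Section \ref{ssec:flagP1}) and the trivial bundle $\P^1\times G/B$ has tag $(0,\ldots,0)$, the hypothesis gives $\cY|_{C_i} \cong C_i\times G/B$ for every $C_i\in \cM_i$. Next, fix one family $\cM=\cM_j$ with universal family $p\colon \cU\to \cM_j$ and evaluation $\ev\colon \cU\to X$. The pullback $\ev^*\cY\to \cU$ is then a $G/B$-bundle trivial on every $p$-fiber, and a cohomology-and-base-change argument (using $H^0(\P^1,\cO)=\C$ and $H^1(\P^1,\cO)=0$ applied to the adjoint of the trivial principal bundle) yields a descent $\ev^*\cY \cong p^*\cZ_j$ for some $G/B$-bundle $\cZ_j\to \cM_j$. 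This descent equips us, for any two points $x,x'$ lying on a common curve of $\cM_j$, with a canonical parallel-transport isomorphism $\cY_x\cong \cY_{x'}$.

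Finally, I would assemble these parallel transports into a global trivialization. Fix $x_0\in X$ with fiber $\cY_{x_0}\cong G/B$. For any $y_0\in \cY_{x_0}$ and any $x\in X$, iterating the transports along a chain drawn from $\cM_1,\ldots,\cM_s$ and connecting $x_0$ to $x$ produces a candidate point in $\cY_x$. To promote this into a morphism $\sigma\colon X\to \cY$, I would work with the variety $\Sigma_N(x_0)$ of $N$-step chains originating at $x_0$: for $N$ large this variety dominates $X$, and parallel transport defines a morphism $\cY_{x_0}\times \Sigma_N(x_0)\to \cY$ over $X$. The connectedness of the fibers of $\Sigma_N(x_0)\to X$---a consequence of rational chain connectedness together with the freedom to pad chains using multiple families---forces this morphism to factor through $\cY_{x_0}\times X\to \cY$. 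Varying $y_0\in \cY_{x_0}$ then yields the trivialization $X\times G/B\cong \cY$. I expect the main obstacle to be precisely this path-independence of the parallel transport; once it is in place, the rest of the argument is formal descent.
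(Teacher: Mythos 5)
Your Steps 1 and 2 are sound, and for the reason you give: a zero tag forces $\cY|_{C}\cong C\times G/B$ by Grothendieck's theorem on principal bundles over $\P^1$, the induced identification $\cY_x\cong\cY_{x'}$ is canonical because two trivializations over $C$ differ by a morphism $\P^1\to\Aut(G/B)$ into an affine group, hence by a constant automorphism, and the descent $\ev^*\cY\cong p^*\cZ_j$ is a standard consequence (compare Lemma \ref{lem:lifting0} of the paper, which is the same mechanism with the roles of base and fiber exchanged). The genuine gap is in Step 3, at exactly the point you flag and then wave through. The connectedness of the fibers of $\Sigma_N(x_0)\to X$ is \emph{not} a consequence of rational chain connectedness: RCC with respect to $\cM_1,\dots,\cM_s$ says only that these fibers are nonempty, i.e.\ that every point is reachable by a chain. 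The space of chains of bounded length and fixed combinatorial type joining two fixed points is in general disconnected (already for chains of lines on a variety where only finitely many lines pass through a general point, the fiber is typically finite of cardinality greater than one), and ``padding'' does not repair this: appending curves maps the old components into a larger chain space but provides no algebraic homotopy between two given chains with the same endpoints, since curves in an unsplit family admit no degeneration or cancellation. Connectedness of these fibers is in effect an algebraic simple-connectedness statement about $(X,\cM_1,\dots,\cM_s)$, essentially equivalent to the path-independence of your transport; assuming it is assuming the core difficulty of the theorem.

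There is a second, independent, problem in the same step: even granting connected fibers, your factorization claim does not follow. A morphism $\cY_{x_0}\times\Sigma_N(x_0)\to\cY$ over $X$ factors through $\cY_{x_0}\times X$ if and only if the transport is \emph{constant} on every fiber of $\Sigma_N(x_0)\to X$, and a morphism from a connected proper variety to the fiber $\cY_x\cong G/B$ need not be constant. What would give constancy on connected components is a rigidity computation you never make: since all tags vanish, the vertical part of the normal bundle of a minimal section over a curve $C_i$ is trivial, so a section of it vanishing at one point vanishes identically; this shows that the differential of the transport along endpoint-fixing deformations of a chain is zero, hence local constancy. With that in place, connectedness would suffice---but the missing connectedness of (a) still blocks the global conclusion. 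Note finally that the paper itself contains no proof of this statement: it is imported verbatim from \cite[Theorem 5.5]{MOS5}, so your attempt has to stand on its own, and as written it does not; the monodromy control you defer is precisely what any complete proof must supply (for instance by gluing section data over the universal families, in the spirit of Lemma \ref{lem:map_sections}, rather than by transporting fibers along chains from a fixed base point).
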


Let $\pi:\cY\to X$ be a $G/B$-bundle, uniform with respect to an unsplit dominating family $\cM$ of rational curves, with tag $\delta$.
We set 
\begin{equation}\label{eq:I0}
I_0:=\left\{i\in \Delta|\,\,\delta(i)=0\right\}\subset \Delta,\qquad N:=I_0^c\subset \Delta 
\end{equation}
and denote by $F\simeq\cD_{I_0}(I_0)$ the fiber of $\rho_{I_0}\colon\cY\to \cY(N)$.  
Then over every rational curve $C$ of the family $\cM$ we have a well defined trivial proper subflag $F\times C\subset\pi^{-1}(C)$. This subflag determines a section of the restriction of the pullback $\ev^*\cY(N)\to \cU$ to $C$, for every $C$, and one may then prove (cf. \cite[Section~6.1]{MOS5}) that all these sections glue together into a section: 

\begin{lemma}\label{lem:map_sections}
Let $X$ be  smooth projective variety, dominated by an unsplit  family $\cM$ of rational curves, with evaluation  $\ev:\cU\to X$. Let $\pi\colon\cY\to X$ be a flag bundle, uniform with respect to $\cM$. Then there exists a morphism 
\begin{equation}\label{eq:section0}
s_{0}:\cU\to \cY(N), 
\end{equation}
such that $\pi_{N}\circ s_0=\ev$, sending a fiber $\ell$ of $\cU\to \cM$ to the unique minimal section of the pullback of $\pi_{N}:\cY(N)\to X$ to $\ell$.
\end{lemma}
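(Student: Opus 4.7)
The goal is to produce a global section $s_{0}$ of $\pi_{N}:\cY(N)\to X$ along $\ev$, or equivalently a section of the pullback bundle $\ev^{*}\cY(N)\to \cU$, which fiberwise over $\cM$ restricts to the unique minimal section. I would build this section by exploiting uniqueness of minimal sections in each fiber of $p:\cU\to\cM$, and then gluing via the relative Hom scheme.

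I would begin with the pointwise picture. Fix a curve $C\in\cM$, viewed as a fiber of $p:\cU\to\cM$, and consider the restriction $\ev^{*}\cY|_{C}\to C\cong\P^{1}$. By the uniformity assumption this is a $\cD$-bundle over $\P^{1}$ with tag $\delta$. Applying Lemma \ref{lem:subtag} (with the whole flag bundle $\ev^{*}\cY$ considered relative to $\ev^{*}\cY(N)$), I deduce that $\ev^{*}\cY(N)|_{C}\to C$ is a $\cD_{N}(N)$-bundle over $\P^{1}$ with tag $\delta|_{N}$, whose entries are strictly positive by the very definition of $N=I_{0}^{c}$. A flag bundle over $\P^{1}$ with strictly positive tag admits a minimal section which, having no deformations fixing a point, is uniquely determined. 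Hence each $\ev^{*}\cY(N)|_{C}$ carries a distinguished section $s_{C}$.

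To globalize, I would consider the relative section scheme $\operatorname{Sec}_{\cM}(\cU/\cM,\ev^{*}\cY(N)/\cU)\to\cM$, which is representable because $\ev^{*}\cY(N)\to\cU$ is projective (being a flag bundle) and $p:\cU\to\cM$ is flat and projective (since $\cM$ is proper and the family is unsplit). Within this relative scheme, the locus of sections of minimal relative anticanonical degree is open, and by the uniqueness established above its fiber over every $C\in\cM$ is a single reduced point. Thus this locus maps bijectively and bicontinuously to $\cM$; a standard argument (or the universal property together with flatness) upgrades this to a section $\sigma:\cM\to \operatorname{Sec}_{\cM}$. By the universal property of $\operatorname{Sec}_{\cM}$, $\sigma$ corresponds to a morphism $\widetilde{s}_{0}:\cU\to \ev^{*}\cY(N)$ over $\cU$, and composing with the natural projection $\ev^{*}\cY(N)\to \cY(N)$ yields $s_{0}:\cU\to\cY(N)$ with $\pi_{N}\circ s_{0}=\ev$, sending each fiber $\ell$ to the unique minimal section of the pullback of $\pi_{N}$ to $\ell$.

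The main obstacle is the gluing step: one must check that the subscheme of minimal sections of the relative section scheme is well-behaved enough to produce an honest section of $\operatorname{Sec}_{\cM}\to\cM$. The strict positivity of $\delta|_{N}$ is the crucial input—it ensures not only existence but scheme-theoretic uniqueness (no infinitesimal deformations) of the minimal section fiberwise, so the locus in question is reduced and maps isomorphically onto $\cM$. Once this is granted, the remainder of the argument is formal.
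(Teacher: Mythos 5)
Your argument is correct in outline, and it takes a genuinely different route from the paper, which in fact gives no self-contained proof: the text preceding the lemma only observes that over each curve $C\in\cM$ the zero-tagged nodes $I_0$ cut out a trivial subflag $F\times C\subset \pi^{-1}(C)$, whose image in $\ev^*\cY(N)|_C$ is the distinguished section, and then delegates the gluing entirely to \cite[Section~6.1]{MOS5}. You replace that citation by an explicit gluing mechanism: the relative section scheme over $\cM$, the open-and-closed decomposition of it by the numerical class of the section, and fiberwise scheme-theoretic rigidity of the minimal section. This makes the lemma self-contained, at the cost of having to verify the deformation-theoretic input by hand, whereas the paper's (cited) route obtains existence, uniqueness and gluing simultaneously from the structure theory of flag bundles on $\P^1$ via the trivial subflag picture.

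Three points in your write-up need repair, all fixable. First, $\cY(N)$ is a $\cD(N)$-bundle, with fibers $G/P(N^c)$; it is \emph{not} a $\cD_N(N)$-bundle, and Lemma \ref{lem:subtag} does not apply the way you use it: that lemma computes the tag of the pullback $s^*\cY$ along a minimal section of $\cY(I)$ (a bundle of type $\cD_{I^c}$ over the section), not any invariant of the quotient bundle $\cY(N)|_C\to C$. The fiberwise facts you actually need --- that for $\delta|_N$ strictly positive the bundle $\cY(N)|_C$ has a unique minimal section $s$ with $H^0\bigl(C,s^*T_{\cY(N)|_C/C}\bigr)=0$ --- follow instead from Grothendieck's theorem (every flag bundle on $\P^1$ is diagonalizable), under which $s^*T_{\cY(N)|_C/C}$ splits into line bundles whose degrees are nonpositive integral combinations of the $d_i$ involving at least one $i\in N$, hence strictly negative. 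Second, ``no deformations with a point fixed'' does not by itself imply uniqueness: in a trivial bundle every constant section is minimal in this sense, and rigidity alone would in any case not exclude several isolated sections in the same class; uniqueness again requires the positivity of $\delta|_N$, for instance by using the $\C^*$-action coming from diagonalizability to degenerate any section of minimal class to a fixed one and invoking rigidity there. Third, your ``bijective and bicontinuous'' step silently uses properness of the minimal-class locus over $\cM$: this holds because a limit of sections of minimal class cannot break off vertical components, as these have positive degree against the relatively ample classes. Granting properness, unramifiedness (from rigidity) and injectivity, the locus is a proper monomorphism, hence a closed immersion, onto the reduced variety $\cM$, hence an isomorphism --- this is the precise form of the ``standard argument'' you invoke, and with it your construction of $s_0$ goes through.
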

 
The following statement is a diagonalizability criterion for uniform flag bundles, that can be read in \cite[Corollary 6.5]{MOS5}.

\begin{proposition}\label{prop:diag}
Let $X$ be a Fano manifold of Picard number one, $\pi:\cY\to X$ be a $G/B$-bundle over $X$, uniform with respect to an unsplit and dominating family $\cM$ of rational curves  in $X$, and let $I_0$, $N$ be as in (\ref{eq:I0}). Assume that the section $s_0:\cU\to \cY(N)$ factors via $q:\cU\to X$. Then $\pi$ is diagonalizable. 
\end{proposition}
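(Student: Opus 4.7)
The plan is to produce a section of $\pi : \cY \to X$ and then conclude via Proposition \ref{prop:sect_diag}. The hypothesis provides $\sigma : X \to \cY(N)$ with $s_0 = \sigma \circ q$; since $q$ is surjective and $\pi_N \circ s_0 = q$ by Lemma \ref{lem:map_sections}, this forces $\pi_N \circ \sigma = \id_X$, so $\sigma$ is a section of $\pi_N : \cY(N) \to X$. I then form the pullback flag bundle
$$
\cY' \,:=\, \sigma^* \cY \,=\, X \times_{\cY(N)} \cY \,\longrightarrow\, X,
$$
which, as the pullback of the flag bundle $\rho_{I_0} : \cY \to \cY(N)$ of type $\cD_{I_0}$, is itself a flag bundle of type $\cD_{I_0}$ over $X$, equipped with a natural embedding $\cY' \hookrightarrow \cY$ over $X$.

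The next step is to show that $\cY'$ is trivial by computing its tag on curves of $\cM$ and invoking Theorem \ref{thm:trivial}. Given $C \in \cM$ viewed as a fiber of $p : \cU \to \cM$, Lemma \ref{lem:map_sections} identifies $s_0|_C$ with the unique minimal section of $q^*\cY(N)|_C \to C$; the factorization $s_0|_C = \sigma \circ q|_C$ means that pulling $\rho_{I_0}$ back along this minimal section produces precisely $q^*\cY'|_C$. Lemma \ref{lem:subtag}, applied with $I = N$ so that $I^c = I_0$, then yields that the tag of $q^*\cY'|_C$ equals $\delta|_{I_0}$, which is identically zero by the very definition of $I_0$ in (\ref{eq:I0}). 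Since $X$ is Fano of Picard number one and $\cM$ is an unsplit dominating family, $X$ is rationally chain connected by $\cM$, so Theorem \ref{thm:trivial} gives $\cY' \cong X \times \cD_{I_0}(I_0)$. In particular $\cY'$ admits a section, and composing with the embedding $\cY' \hookrightarrow \cY$ produces a section of $\pi$; Proposition \ref{prop:sect_diag} then concludes.

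The most delicate step I anticipate is the tag computation: one has to carefully identify the minimal section of $q^*\cY(N)|_C$ furnished by Lemma \ref{lem:map_sections} with the section induced by $\sigma$ (which is exactly where the factorization hypothesis $s_0 = \sigma \circ q$ is used in an essential way), and then match up the setup of Lemma \ref{lem:subtag} so that the tag of $q^*\cY'|_C$ is read off as the restriction of $\delta$ to $I_0$. Once these identifications are secured, the triviality of $\cY'$ and the subsequent extraction of a section of $\pi$ are formal.
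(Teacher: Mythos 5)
Your argument is correct and is essentially \emph{the} argument for this statement: the paper itself gives no proof, quoting it from \cite[Corollary 6.5]{MOS5}, and the route there is the one you take --- the factorization of $s_0$ through $q$ yields a section $\sigma$ of $\pi_N$, the pullback $\sigma^*\cY$ is a $\cD_{I_0}$-bundle over $X$ whose tag on the curves of $\cM$ vanishes by Lemma \ref{lem:subtag} (your identification of $(s_0|_C)^*\cY$ with $(q|_C)^*\sigma^*\cY$, which is where the factorization hypothesis enters, is exactly right), Theorem \ref{thm:trivial} then trivializes it, and the resulting section of $\pi$ gives diagonalizability via Proposition \ref{prop:sect_diag}. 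The only point you invoke silently is that $X$ is rationally chain connected with respect to $\cM$, needed for Theorem \ref{thm:trivial}; this is standard for a Fano manifold with $\rho_X=1$ and an unsplit dominating family (via \cite[IV.4.16]{kollar}: the ${\rm rc}_{\cM}$-quotient must be a point, since otherwise the pullback of an ample class from the quotient would be a nontrivial nef class trivial on $\cM$-curves), but it deserves a word since it is precisely where the Picard number one hypothesis is used in this step.
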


%%%%%%%%%%%%%%%%%%%%%%%%%%%

\subsection{Nestings}
 
Later on we will use some results on sections of the natural projections of rational homogeneous bundles, called {\em nestings}; see \cite{MOS7} for details.

\begin{definition}
Given a Dynkin diagram $\cD$ and two disjoint nonempty sets of nodes $I,J$ of $\cD$, a {\em nesting of type} $(\cD,I,J)$ is a section of the contraction $\cD(I\cup J)\to \cD(I)$. Given a flag bundle $\cY\to X$ of type $\cD$, a {\em nesting of type} $(\cY,I,J)$ is a section of the contraction $\cY(I\cup J)\to \cY(I)$.
\end{definition}

The results obtained in \cite{MOS7} essentially say that this kind of maps are very rare. In this paper we will only make use of them in the case of bundles of type $\DA$, as summarized in the following statement:

\begin{proposition}\label{prop:nest}
Let $\pi: \cY \to X$ be a flag bundle of type  $\DA_r$ on a smooth projective variety $X$, uniform with respect to an unsplit dominating family of rational curves $\cM$, with tag $\delta=(d_1,\dots,d_r)$, and admitting a nesting of type $(\cY,I,J)$. Then $r$ is odd, $I\cup J=\{1,r\}$, and $d_i=d_{r+1-i}$ for all $i$. Moreover $\cY$ reduces to $\PSP(r+1)$, and the associated flag bundle of type $\DC_{(r+1)/2}$ is uniform with respect to $\cM$ with tag $(d_1,\dots,d_{(r+1)/2})$.
\end{proposition}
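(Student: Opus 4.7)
The plan is to reduce to the absolute setting on each fiber of $\pi$, invoke the classification of nestings of $\DA_r$-flag manifolds from \cite{MOS7} to pin down $r$ and $I\cup J$, and then extract the tag symmetry and the structure-group reduction by restricting to curves in $\cM$.

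First, I would restrict the nesting $s:\cY(I)\to \cY(I\cup J)$ to an arbitrary fiber $F\simeq \DA_r(I)$ of $\pi_I:\cY(I)\to X$. This yields a section of $\DA_r(I\cup J)\to \DA_r(I)$, that is, an absolute nesting of type $(\DA_r,I,J)$. By the classification of nestings in \cite{MOS7}, the only such absolute maps on $\DA_r$-flag manifolds come from the symplectic reduction, which forces $r$ to be odd and $\{I,J\}=\{\{1\},\{r\}\}$. Without loss of generality we assume $I=\{1\}$ and $J=\{r\}$, so that $s$ is a section of $\cY(\{1,r\})\to \cY(\{1\})$.

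Next, I would extract the symmetry of $\delta$ by pulling everything back to a curve $C\in \cM$: the $\P^r$-bundle $\cY(\{1\})|_C$ is the Grothendieck projectivization of a vector bundle $\cE=\bigoplus_{j=0}^{r}\cO_{\P^1}(a_j)$ with $a_0\leq \dots\leq a_r$, and the tag computation for $\DA_r$-bundles over $\P^1$ recalled earlier gives $d_j=a_j-a_{j-1}$. The restriction of $s$ to $C$ provides, as in Example \ref{ex:C_structure}, a relative contact structure on $\P(\cE)\to \P^1$; equivalently, a nondegenerate alternating form on $\cE$ with values in some line bundle $L$ on $\P^1$, i.e.\ an isomorphism $\cE\cong \cE^\vee\otimes L$. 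Comparing splitting types yields $a_j+a_{r-j}=\deg L$ for every $j$, and hence $d_j=d_{r+1-j}$. Running the same analysis globally over $X$, the nesting defines a relative contact structure on $\cY(\{1\})\to X$, which by Example \ref{ex:C_structure} is equivalent to $\pi$ reducing to $\PSP(r+1)\subset \PGL(r+1)$ in the sense of Definition \ref{def:reduce}; the associated $\DC_{(r+1)/2}$-flag bundle $\cY'\subset \cY$ parametrizes isotropic flags.

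The uniformity of $\cY'\to X$ with respect to $\cM$ with tag $(d_1,\dots,d_{(r+1)/2})$ is then a pullback computation: for any $C\in \cM$, the isotropic flag bundle of the symplectic vector bundle $\cE|_{\P^1}$ has, as tag, the first $(r+1)/2$ consecutive differences $a_j-a_{j-1}=d_j$, the remaining entries being determined by the symplectic pairing. The main obstacle is the absolute classification step from \cite{MOS7}, a nontrivial rigidity statement on sections of contractions between $\DA_r$-flag manifolds; once that is in hand, the translation via the symplectic/contact dictionary and the packaging into a structure-group reduction proceed by standard arguments.
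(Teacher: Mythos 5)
Your proposal is correct and follows the same skeleton as the paper's proof: restrict the nesting to the fibers of $\pi$ and invoke the classification of absolute nestings of $\DA_r$-flag manifolds from \cite{MOS7} to force $r$ odd and $I\cup J=\{1,r\}$. Where you diverge is in what you do afterwards: the paper simply cites \cite[Corollary 3.9, Proposition 4.3]{MOS7} for the reduction to $\PSP(r+1)$ and \cite[Proposition 4.11]{MOS7} for the symmetry of the tag, whereas you unpack these citations into direct arguments — restricting the symplectic structure to $C\in\cM$ gives $\cE\cong\cE^\vee\otimes L$ on $\P^1$, whence $a_j+a_{r-j}=\deg L$ and $d_j=d_{r+1-j}$, which is a clean, self-contained derivation of exactly what the cited Proposition 4.11 encapsulates, and your final pullback computation for the tag of the $\DC_{(r+1)/2}$-bundle matches the paper's remark that it follows from the definition of tag. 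The one step you should not wave through ``as in Example \ref{ex:C_structure}'' is the passage from the nesting --- an a priori arbitrary section of $\cY(\{1,r\})\to\cY(\{1\})$ --- to a \emph{relative} contact structure: Example \ref{ex:C_structure} only states the equivalence between a contact structure and the reduction of the structure group, not that a nesting is fiberwise linear. That linearity is part of the rigidity content of \cite{MOS7} (the absolute classification tells you each fiberwise nesting is induced by a symplectic form, unique up to scalar), and one must then glue these fiberwise forms over the base, i.e., check that a fiberwise-linear morphism $\P(\cE)\to\P(\cE^\vee)$ over $X$ (or over $C$) comes from a bundle map $\cE\to\cE^\vee\otimes M$ for some line bundle $M$; this is routine, but it is precisely what \cite[Corollary 3.9, Proposition 4.3]{MOS7} are cited for in the paper, so your write-up should either cite them as the paper does or spell out this gluing explicitly rather than attributing it to the example.
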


\begin{proof}
Restricting the nesting to fibers over points of $X$, and using \cite[Theorem 1.1]{MOS7}, we get that $r$ is odd and that, up to reverting the order of the nodes of $\DA_r$, $I=\{1\}$, $J=\{r\}$. By \cite[Corollary 3.9, Proposition 4.3]{MOS7}, the bundle reduces to $\PSP(r+1)$ and, applying \cite[Proposition 4.11]{MOS7}, the tag $\delta$ is symmetric. The last observation follows by the definition of tag.
\end{proof}

%%%%%%%%%%%%%%%%%%%%%%%%%%%%%%%%%%%%%%%%%%%%%%%%%%%

\section{Setup}\label{set:main}

Let $Y$ be a smooth projective variety of Picard number two which admits two projective bundle structures  $p_{\pm}:Y\to X_{\pm}$ of relative dimensions $r_\pm:=\dim Y -\dim X_\pm$. 
They define two complete flag bundles (of type $\DA_{r_+}$, $\DA_{r_-}$, respectively) over $X_+$ and $X_-$; we denote them by $\cY^+,\cY^-$:
$$
\xymatrix@R=7pt{\cY^-\ar[rd]\ar[dd]&&\cY^+\ar[ld]\ar[dd]\\&Y\ar[ld]_{p_-}\ar[rd]^{p_+}&\\X_-&&X_+
}
$$

We denote by $\cM_+$ the family of lines in the fibers of $p_+$, with universal family $\ev_+:\cU_+ \to Y$. We can think of $\cM_+$  as a dominating unsplit family of rational curves in $X_-$, which is not in general a complete family:
$$ \xymatrix@R=18pt{
\cU_+\ar[d] \ar[r]^{\ev_+}& Y \ar[r]^{p_-} \ar[d]_{p_+} & X_- \\
\cM_+ \ar[r]& X_+& }$$
The map $\ev_+\colon\cU_+ \to Y$ is a $\P^{r_+-1}$ bundle, given by the projectivization of the relative cotangent bundle of $Y \to X_+$, and the composition $p_+\circ \ev_+\colon\cU_+\to X_+$ is a partial flag bundle (of points and lines on the fibers of $p_+$), that can be obtained as a contraction of $\cY^+$. The same holds for the family $\cU_-\to \cM_-$  of lines in the fibers of $p_-$, with evaluation map $\ev_-:\cU_-\to Y$.

\begin{assumption}\label{ass:mutual}
In the sequel we will  assume that  $p_-\colon Y\to X_-$ (resp. $p_+$) is uniform with respect to the curves of $\cM_+$ (resp. $\cM_-$). We will say that $p_+$ and $p_-$ are {\em mutually uniform}. 
\end{assumption}

We will denote by $\delta_-$ the tag of the flag bundle $\cY^-\to X_-$  with respect to the family $\cM_+$, and set $I_{0-}:=\left\{j\in \Delta_-|\,\,\delta_-(j)=0\right\}\subset \Delta_-$, $N_-:=I_{0-}^c$, where $\Delta_-$ denotes the set of nodes of the Dynkin diagram  $\DA_{r_-}$. The unmarking of the nodes of $I_{0-}$ defines a factorization of $\cY^-\to X_-$ via the rational homogeneous bundle $\cY^-(N_-)\to X_-$. In the same way we define $\delta_+$, $\Delta_+$, $I_{0+}$, and $N_+$.

\begin{theorem}[Main Theorem]\label{thm:main}
Let $Y$ be a smooth projective variety with $\rho_Y=2$, supporting two mutually uniform projective bundle structures. Then $Y$ is rational homogeneous. 
\end{theorem}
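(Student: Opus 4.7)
The overall strategy is to construct a smooth projective variety $Z$ dominating $Y$ whose number of elementary $\P^1$-bundle contractions equals its Picard number $\rho_Z$. Once this is achieved, \cite[Theorem A.1]{OSW} identifies $Z$ with a complete flag manifold $G/B$, and $Y$, arising as a smooth contraction of $Z$, inherits a rational homogeneous structure of the form $G/P$.

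The natural starting candidate is assembled from the two flag bundles $\cY^{\pm}\to X_{\pm}$ already present in the setup. Each $\cY^{\pm}$ is a complete flag bundle of type $\DA_{r_{\pm}}$, so it carries $r_{\pm}$ elementary $\P^1$-bundle contractions indexed by $\Delta_{\pm}$. I would first consider the fiber product $Z := \cY^- \times_Y \cY^+$, which admits all $r_- + r_+$ of these elementary $\P^1$-bundle contractions pulled back from both sides; a standard Picard number computation gives $\rho_Z = 2 + r_- + r_+$, so two further elementary $\P^1$-bundle contractions, accounting for the two directions in $\Nu(Y)$, still need to be exhibited.

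The mutual uniformity hypothesis is what should produce these missing contractions. Applying Lemma \ref{lem:map_sections} to $\cY^-\to X_-$, uniform with respect to $\cM_+$, yields a canonical morphism $s_0^+\colon\cU_+\to\cY^-(N_-)$ whose restriction to each line of $\cM_+$ is the unique minimal section; symmetrically, one obtains $s_0^-\colon\cU_-\to\cY^+(N_+)$. I would use Lemma \ref{lem:minimals} to argue that these sections are ``relatively constant'' along the fibers with zero tag, hence factor through maps of the fiber product onto bundles of strictly lower Picard rank. The resulting identifications should produce two additional elementary $\P^1$-bundle contractions of $Z$ (or of a suitable quotient $Z'$ of it) corresponding to nodes complementary to $I_{0\pm}$, yielding a variety on which the count of $\P^1$-bundle contractions matches the Picard number.

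The principal obstacle is the combinatorial bookkeeping forced by nestings. The sections $s_0^{\pm}$ tend to produce nestings inside $\cY^{\pm}$; by Proposition \ref{prop:nest}, each such nesting forces the corresponding $\DA_{r_{\pm}}$-bundle to reduce to a symplectic $\DC_{(r_{\pm}+1)/2}$-bundle with a symmetric tag. These symplectic reductions must be tracked consistently on both sides in order to (i) correctly lower the Picard number of the combined variety, (ii) ensure smoothness and that the new contractions are honest $\P^1$-bundles, and (iii) verify that $Z$ is rationally chain connected by unsplit families of $\P^1$s, so that \cite[Theorem A.1]{OSW} applies. Once this delicate accounting is carried out, the theorem of \cite{OSW} identifies $Z$ with $G/B$, and $Y$ is realized as a rational homogeneous quotient of $Z$, completing the proof.
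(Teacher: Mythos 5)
Your global strategy --- dominate $Y$ by a variety carrying as many elementary $\P^1$-bundle contractions as its Picard number and invoke \cite[Theorem A.1]{OSW} --- is exactly the strategy announced in the paper, but your execution has a genuine gap at precisely the hard point, together with a numerical slip. On $Z=\cY^-\times_Y\cY^+$ one has $\rho_Z=r_-+r_+$ (each $\cY^\pm\to Y$ has relative Picard number $r_\pm-1$), not $2+r_-+r_+$, and only $r_-+r_+-2$ elementary $\P^1$-bundle contractions pull back automatically, namely those of $\cY^\mp\to Y$; the deficit of two is coincidentally right. The two missing contractions (the ones ``moving the point'' of each flag, i.e.\ lifting $\rho_1^\pm$) exist as $\P^1$-bundles on $Z$ only if the flag bundle $\cY^\mp\to Y$ is trivial on the curves of $\cM_\pm$, which by Lemma \ref{lem:subtag} and Lemma \ref{lem:lifting0} amounts to the tag restriction $\delta_\pm=(d,0,\dots,0)$, possibly after a symplectic reduction to a $\DC$-bundle. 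Establishing that restriction is the core of the proof, and your proposal contains no argument for it. Moreover, the tools you cite cannot supply it: Lemma \ref{lem:map_sections} only provides the sections $s_{0\pm}$; Lemma \ref{lem:minimals} is used in the paper in the opposite direction --- relative constancy of $s_{0-}$ leads, via Propositions \ref{prop:diag} and \ref{prop:product}, to the degenerate case $Y\simeq\P^{r_-}\times\P^{r_+}$ --- so it does not manufacture new contractions; and Proposition \ref{prop:nest} only constrains matters once a nesting has actually been produced, which you never do.

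What is missing is the classification input for uniform bundles on projective space, around which the paper's proof is organized. Restricting $\cU_-$ to a fiber $P_x\simeq\P^{r_+}$ of $p_+$ gives a uniform $\P^{r_--1}$-bundle $\cU_x\to P_x$, and the argument splits on whether $\cU_x$ is diagonalizable. If it is (automatic when $r_-<r_+$ by \cite{VdV,Sa}), a section of $\cU_x\to P_x$, combined with $s_{0-}$ or its lift $f$ from Lemma \ref{lem:YI} and with Lemma \ref{lem:splittangent}, produces a nesting of $\cY^+$, and Proposition \ref{prop:nest} then pins $\delta_+$ down to $(d,0,\dots,0)$ or to the symmetric symplectic case; only after that do Lemma \ref{lem:lifting0} and Corollary \ref{cor:flag} yield enough $\P^1$-bundle structures for \cite[Theorem A.1]{OSW}. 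If instead $r_-=r_+$ and $\cU_x$ is nowhere diagonalizable, the theorem of \cite{EHS} forces $\cU_x\simeq\P(T_{P_x})$ or $\P(\Omega_{P_x})$, whence $\cY^-$ itself acquires a second flag-bundle structure over $X_+$ and is directly a complete flag manifold --- no fiber product is needed there. Without this diagonalizability dichotomy and the uniform-bundle classification, your two ``missing'' contractions on $Z$ cannot be exhibited, so the proposal as written does not close.
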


\begin{remark}\label{rem:list}
With the notation introduced in Section \ref{ssec:notnRH}, one may check that the list of varieties satisfying the assumptions of the Theorem is:
$$\begin{array}{c}
\DA_n(1,n),\,\,n\geq 2;\quad\DA_n(r,r+1),\,\,n\geq 2,\,\,r<n;\quad \DB_3(1,3);\quad \DB_n(n-1,n),\,\,n\geq 2;\\[2pt]
\DC_n(r,r+1),\,\,n\geq 2,\,\, r<n;\quad \DD_n(n-1,n),\,\,n\geq 4; \quad \DF_4(2,3);\quad  \DG_2(1,2).
\end{array}
$$
\end{remark}

Since $\cM_-$ is an  unsplit dominating family of rational curves in $X_+$ with respect to which $p_+$ is uniform, by Lemma \ref{lem:map_sections}, there exists a morphism $s_{0-}:\cU_- \to \cY^+(N_+)$ over $X_+$; analogously, we obtain a morphism $s_{0+}:\cU_+ \to \cY^-(N_-)$.
The above maps fit in the following commutative diagram:
$$
\xymatrix
@R=20pt
{&\mathcal Y^-\ar[d]
\ar[r]&\cY^-(N_-)&&\cY^+(N_+)&\cY^+\ar[d]\ar[l]
&\\
\cM_-&\cU_-\ar[d]\ar[l]\ar[rr]_{\ev_-}\ar[urrr]_{s_{0-}}&&Y\ar[lld]^{p_-}\ar[rrd]_{p_+}&&\cU_+\ar[ulll]^{s_{0+}}\ar[d]\ar[r]\ar[ll]^{\ev_+}&\cM_+\\
&X_-&&&&X_+&}
$$

In the case in which $\delta_+(1)\neq 0$, the natural projection $\cY^+(N_+)\to X_+$ factors via $Y$. The next statement shows that if $\delta_+(1)= 0$ then the section $s_{0-}$ can be lifted to a bundle $\cY^+(I)$ dominating $\cY^+(N_+)$ and $Y$.

\begin{lemma}\label{lem:YI}
Assume that $\delta_+(1)=0$. Set $I:=N_+\cup\{1\}$.  
Then there exists a morphism $f:\cU_-\to \cY^+(I)$ fitting in the following commutative diagram:
\begin{equation}\label{diag:Y_I}
\xymatrix@R=10pt{&&\cY^+(N_+)\ar[rd]&\\\cU_-\ar@/^/[rru]^{s_{0-}}\ar@/_/[rrd]_{\ev_-}\ar[r]^{f}&\cY^+(I)\ar[ru]\ar[rd]&&X_+\\&&Y\ar[ru]_{p_+}&}
\end{equation}
where the unlabelled maps are natural projections of rational homogeneous bundles over $X_+$.
\end{lemma}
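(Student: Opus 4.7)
The approach is to verify that the natural morphism $g:=(s_{0-},\ev_-):\cU_-\to \cY^+(N_+)\times_{X_+}Y$ factors through the closed embedding $\cY^+(I)\hookrightarrow \cY^+(N_+)\times_{X_+}Y$; this embedding realizes $\cY^+(I)$ as the incidence subvariety, where a pair $(F, V_1)$ consisting of an $N_+$-flag $F$ and a 1-dimensional subspace $V_1$ is admitted exactly when $V_1$ is contained in the smallest subspace of $F$. Since this incidence is a closed condition on the irreducible variety $\cU_-$, it suffices to check it on a general fiber of $\cU_-\to \cM_-$, and then the fiberwise lifts will glue to a global $f$ automatically.

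Fix a general $C\in \cM_-$, denote by $\ell$ the corresponding fiber of $\cU_-\to\cM_-$, and set $\ol C:=p_+(\ev_-(\ell))\subset X_+$. By uniformity of $\cY^+$ with respect to $\cM_-$, the pullback $\ol C^*\cY^+\to \ol C$ is a flag bundle on $\P^1$ of type $\DA_{r_+}$ with tag $\delta_+$, and by Lemma~\ref{lem:map_sections}, $s_{0-}|_\ell$ coincides with the unique minimal section $\Gamma_0^{N_+}$ of $\ol C^*\cY^+(N_+)\to \ol C$. Lemma~\ref{lem:subtag} identifies the pullback of $\cY^+\to \cY^+(N_+)$ via $\Gamma_0^{N_+}$ as a $\cD_{I_{0+}}$-flag bundle with tag $\delta_+|_{I_{0+}}=0$, so Theorem~\ref{thm:trivial} yields a trivialization of the form $\ol C\times \cD_{I_{0+}}(I_{0+})$. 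Setting $i_1:=\min N_+$ (so that $i_1\geq 2$ since $1\in I_{0+}$), the fiber of $\cY^+(I)\to \cY^+(N_+)$ over $\Gamma_0^{N_+}$ is a trivial $\P^{i_1-1}$-bundle, which embeds under $\cY^+(I)\to Y$ as a trivial sub-$\P^{i_1-1}$-bundle $\cT\cong \ol C\times \P^{i_1-1}$ of $\ol C^*Y$.

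The fiberwise incidence condition then reduces to showing that the section $\ev_-|_\ell:\ol C\to \ol C^*Y$ takes values in $\cT$. Writing $\cE|_{\ol C}=\cO(a_0)^{i_1}\oplus\cE'$, with $\cE'$ a direct sum of line bundles of degrees strictly greater than $a_0$, this amounts to saying that the 1-dimensional quotient of $\cE|_{\ol C}$ determined by $\ev_-|_\ell$ factors through the summand $\cO(a_0)^{i_1}$. Once this is established, $(\Gamma_0^{N_+},\ev_-|_\ell)$ lifts uniquely to a section of $\ol C^*\cY^+(I)\to \ol C$, yielding $f|_\ell$, and the resulting fiberwise lifts glue to the required morphism $f$.

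The main obstacle is the final incidence verification. I expect the argument to exploit that $C$ is a line in a fiber of $p_-$, hence of extremal numerical class in $N_1(Y)$; together with mutual uniformity, this should force $\cO_Y(1)_{p_+}\cdot C=a_0$, so that $\ev_-|_\ell$ is a minimal section of $\ol C^*Y\to \ol C$ and therefore automatically lies in the trivial sub-bundle $\cT$. A precise argument will likely parallel the minimal-section manipulations of Lemma~\ref{lem:minimals}.
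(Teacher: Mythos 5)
Your reduction of the lemma to a fiberwise incidence statement is sound: $\cY^+(I)$ is indeed the incidence subvariety of $\cY^+(N_+)\times_{X_+}Y$, the condition is closed, and $\cU_-$ is irreducible, so a general-fiber verification would suffice and would glue. But your proof stops exactly where the content of the lemma lies: the claim that $\ev_-|_\ell$ is a minimal section of $\ol{C}^*Y\to\P^1$ (equivalently, that it lands in the trivial subbundle $\cT$) is only announced (``I expect'', ``should force''), not proved, and the mechanism you suggest does not work as stated. First, the equality ``$\cO_Y(1)_{p_+}\cdot C=a_0$'' is not well posed: the integers $a_i$ depend on a choice of twist of $\cE|_{\ol{C}}$, and there is no global normalization of $\cE$ — only the tag, i.e.\ the differences $a_{i+1}-a_i$, is intrinsic, which is why the paper never argues with degrees against a tautological class. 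Second, Lemma \ref{lem:minimals} cannot supply the missing step: its hypothesis is that minimal sections are mapped to minimal sections, so it presupposes exactly the minimality you need rather than giving a criterion for it. A correct argument exists but is of a different nature: if the section $\ev_-|_\ell$ deformed with a point fixed, the deformations would give a positive-dimensional family of distinct curves of the extremal class $[C]$ through a fixed point $y$, each contained in $p_-^{-1}(p_-(y))\cap p_+^{-1}(\ol{C})$; since $p_+$ is finite on the fibers of $p_-$ (the contractions are distinct and $\rho_Y=2$), that intersection is one-dimensional and cannot contain a positive-dimensional family of distinct curves. Hence the section is rigid with a point fixed, i.e.\ minimal, and your incidence check then goes through. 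Without some such argument, nothing is verified on even a single fiber and no morphism $f$ is produced.

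For comparison, the paper's proof bypasses the incidence analysis entirely: it regards $\cU_-$ as a family of rational curves in $Y$ (with honest evaluation $\ev_-$) and applies Lemma \ref{lem:subtag} to the $\DA_{r_+-1}$-bundle $\cY^+\to Y$; since $\delta_+(1)=0$, the nonzero part of the tag of $\cY^+\to Y$ along $\cM_-$ corresponds to $N_+$, the associated partial flag bundle over $Y$ is precisely $\cY^+(I)$, and Lemma \ref{lem:map_sections} then produces $f:\cU_-\to\cY^+(I)$ over $Y$ at once — existence and gluing included — so the lower triangle of (\ref{diag:Y_I}) commutes by construction, and the upper triangle follows from the uniqueness of the minimal section of $\cY^+(N_+)$ over each curve of $\cM_-$. (That route, too, quietly uses that curves of $\cM_-$ restrict $\cY^+$ along minimal sections of $Y\to X_+$, which is the hypothesis of Lemma \ref{lem:subtag}; compare the analogous assertion in the proof of Proposition \ref{prop:diag2}.) So your strategy is viable and close in spirit to the paper's, but as written it has a genuine gap at its central step.
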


\begin{proof}
We may consider $\cU_-$ as a family of rational curves in $Y$, and the $\DA_{r_+-1}$-bundle $\cY^+\to Y$, obtained as a factorization of the $\DA_{r_+}$-bundle $\cY^+\to X_+$. By Lemma \ref{lem:subtag}, numbering the nodes of $\DA_{r_+-1}$ from $2$ to $r_+$,  the set of indices for which the tag of $\cY^+\to Y$ on the curves of $\cU_-$ is zero is equal to $I$. Then Lemma \ref{lem:map_sections} tells us that there exists a map $f:\cU_-\to \cY^+(I)$ commuting with the projections onto $Y$. On the other hand we have a projection $\cY^+(I)\to\cY^+(N_+)$; since $f$ sends a curve of $\cM_-$ to the minimal section of $\cY^+(I)\to X_+$ over it, and $s_{0_-}$ sends a curve of $\cM_-$ to the minimal section of $\cY^+(N_+)\to X_+$ over it, it follows that the map $f$ makes the diagram (\ref{diag:Y_I}) commutative. 
\end{proof}

%%%%%%%%%%%%%%%%%%%%%%%%%%%%%%%%%%%%%%%%%%%%%%%%%%%%%%%%%%%%

\section{Proof of the Main Theorem}\label{sec:proof_main}

In this section we will prove Theorem \ref{thm:main}. We will start by introducing some preliminary results.

%%%%%%%%%%%%%%%%%%%%%%%%%%%

\subsection{Some preliminary results}\label{ssec:prelim}

We start with two lemmas regarding projective bundles.

\begin{lemma}\label{lem:diag} Let $X$ be a smooth projective variety of Picard number one. If $p:Y \to X$ is a diagonalizable  $\P^r$-bundle, then  $Y$ has a second fiber type contraction $q:Y \to Z$ if and only if $p$ is trivial. 
\end{lemma}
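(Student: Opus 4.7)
The reverse implication is immediate: if $p$ is trivial, then $Y \cong X \times \P^r$ and the projection to the second factor is a fiber-type contraction distinct from $p$.

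For the direct implication, the plan is as follows. Diagonalizability of $p$ together with $\rho_X = 1$ lets me write $\cE \cong \bigoplus_{i=0}^r \cO_X(a_i)$ (up to torsion in $\Pic(X)$, which is immaterial for the Chern-theoretic argument below); after tensoring with $\cO_X(-a_0)$ I normalize to $0 = a_0 \leq a_1 \leq \dots \leq a_r$, so the claim reduces to showing that all $a_i$ vanish. The assumption $a_i \geq 0$ makes $\cE$ nef, so $\xi := c_1(\cO_Y(1))$ is nef. The projection $\cE \twoheadrightarrow \cO_X$ onto the first summand gives a section $s_0\colon X \to Y$ along which $\xi$ restricts trivially; hence for any curve $C \subset X$ with $H \cdot C > 0$ the class $R_0 := [s_0(C)]$ satisfies $\xi \cdot R_0 = 0$ and $p^*H \cdot R_0 > 0$. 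Together with the fiber class $R_1$ of $p$ (characterized by $p^*H \cdot R_1 = 0$, $\xi \cdot R_1 = 1$) this exhibits the two extremal rays of $\cNE{Y}$. Consequently, the second contraction $q$ must contract $R_0$ and be defined by a positive multiple of $\xi$; in particular $\xi$ is semi-ample.

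Since $\xi$ is semi-ample, $q$ is of fiber type if and only if $\xi$ is not big, i.e.\ $\xi^{\dim Y} = 0$. Using Grothendieck's pushforward formula $p_*\xi^{r+n} = s_n(\cE^\vee)$ (with $n = \dim X$), the multiplicativity of Segre classes on direct sums, and $s(\cO_X(-a_i)) = (1 - a_i H)^{-1}$, a direct computation gives
$$\xi^{\dim Y} = h_n(a_1,\dots,a_r)\cdot H^n,$$
where $h_n$ is the complete homogeneous symmetric polynomial of degree $n$ and $H^n > 0$. Since each $a_i \geq 0$ and $h_n$ is a sum of monomials of degree $n$, the vanishing of this intersection number forces $a_1 = \dots = a_r = 0$. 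Hence $\cE$ is a trivial bundle and $p$ is trivial.

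The main obstacle I expect is the careful identification of the two extremal rays of the Mori cone, and the equality $q = \phi_\xi$ up to Stein factorization; once this is established, the reduction of ``$q$ of fiber type'' to the numerical condition $\xi^{\dim Y} = 0$ is standard (via the coincidence of Iitaka and numerical dimensions for semi-ample classes), and the remaining step collapses to the elementary positivity of $h_n$ on the nonnegative orthant.
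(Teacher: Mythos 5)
Your proof is correct, but its decisive step is genuinely different from the paper's. Both arguments share the same setup: normalize $\cE \simeq \bigoplus_{i=0}^r \cO_X(a_i)$ with $0=a_0\le\dots\le a_r$, so that $\cE$ is nef and $\xi$ is nef but not ample (trivial on the section given by the quotient $\cE\to\cO_X$), whence $\xi$ lies on the second boundary ray of $\Nef(Y)$ and numerically supports $q$. From there the paper finishes qualitatively, with no intersection-theoretic computation: if $a_r>0$, the effective divisor $\P(\bigoplus_{i=0}^{r-1}\cO_X(a_i))\subset\P(\cE)$, of class $\xi-a_r\,p^*H$, is negative on every curve contracted by $q$ (such curves satisfy $\xi\cdot C=0$ and $p^*H\cdot C>0$), so all of them lie inside this fixed divisor; but the contracted curves of a fiber type contraction cover $Y$, a contradiction, forcing $a_r=0$ and hence triviality. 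You instead translate ``$q$ of fiber type'' into the numerical vanishing $\xi^{\dim Y}=0$ and compute $\xi^{\dim Y}=h_n(a_1,\dots,a_r)\,H^n$ via the Segre class formula, concluding from positivity of $h_n$ on the nonnegative orthant (e.g.\ $h_n\ge a_r^n$). This is valid, and note that you do not even need semiampleness of $\xi$ itself (your own worry about numerically trivial twists): since $q^*A$ is numerically proportional to $\xi$ for $A$ ample on $Z$, and $(q^*A)^{\dim Y}=0$ exactly when $\dim Z<\dim Y$, the whole reduction is purely numerical, so the appeal to Iitaka versus numerical dimension can be dropped. Comparing the two: the paper's covering-family/effective-divisor trick is shorter and softer, pinpointing $a_r$ directly; your computation costs a Chern--Segre calculation but produces the explicit invariant $\xi^{\dim Y}=h_n(a_1,\dots,a_r)H^n$, which quantifies the failure of bigness and would in principle yield finer information (such as $\dim Z$ from which Segre numbers vanish) beyond what the lemma asks.
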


\begin{proof}
Let $L$ be the ample generator of  $\Pic(X)$,  and denote by $\cO_X(k)$ the line bundle $L^{\otimes{k}}$. We can write $Y=\P(\cE)$ with $\cE \simeq \bigoplus_{i=0}^r \cO_X(a_i)$ and $0=a_0 \le a_1 \le \dots \le a_r$.  The vector bundle $\cE$ is nef but not ample, hence its tautological line bundle $\xi$ is nef but not ample, and it is a supporting divisor for $q$.
If $a_r >0$ then we consider the effective divisor $\P(\bigoplus_{i=0}^{r-1} \cO_X(a_i)) \subset   \P(\cE)$, which is linearly equivalent to $\xi \otimes p^*\cO_X(-a_r)$. This divisor is negative on all the curves contracted by $q$, which are thus contained in it, so  $q$ cannot be of fiber type.                
\end{proof}

\begin{lemma}\label{lem:splittangent}
Let $\cE \simeq \bigoplus_{i=1}^s \cO_{\P^r}(a_i)$ be a non trivial bundle of rank $s \ge 2$ on $\P^r$. 
Then every morphism over $\P^r$ from $\P(\cE)$ to $\P(T_{\P^r})$ is constant over $\P^r$.\end{lemma}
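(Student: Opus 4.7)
The plan is to turn a hypothetical fibrewise non-constant morphism $f\colon\P(\cE)\to\P(T_{\P^r})$ over $\P^r$ into a nowhere-vanishing section of a twist of $\Omega_{\P^r}$, and then to obstruct its existence by a Chern-class computation that exploits the non-triviality of $\cE$. By the universal property of Grothendieck's projectivization, $f$ corresponds to a line bundle quotient $p^*T_{\P^r}\twoheadrightarrow\cL$ on $\P(\cE)$. Writing $\xi:=\cO_{\P(\cE)}(1)$, we have $\Pic(\P(\cE))=\Z\xi\oplus p^*\Pic(\P^r)$, so $\cL=\xi^a\otimes p^*\cO(b)$ for some $a,b\in\Z$. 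Restricting to a fibre of $p$ yields a surjection $\cO^{\oplus r}\twoheadrightarrow\cO(a)$ on $\P^{s-1}$, so $a\ge 0$, and $f$ is constant on the fibres of $p$ precisely when $a=0$. I argue by contradiction, assuming $a\ge 1$. Since twisting $\cE$ by a line bundle does not alter $\P(\cE)$, I may further arrange $a_1=0$ in the splitting $\cE=\bigoplus_i\cO(a_i)$; the non-triviality of $\cE$ then provides some $a_s\ge 1$.

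The next step is to restrict the surjection along the distinguished sections $\sigma_i\colon\P^r\to\P(\cE)$ coming from the quotients $\cE\to\cO(a_i)$ of the splitting. Each satisfies $\sigma_i^*\xi=\cO(a_i)$, so pull-back yields a sheaf surjection
$$
T_{\P^r}\twoheadrightarrow\cO(b+a\,a_i)
$$
on $\P^r$. A morphism $T_{\P^r}\to\cO(c)$ is the same datum as a global section of $\Omega_{\P^r}(c)$, and is surjective at a point if and only if that section is non-zero there; so for each $i$ I obtain a nowhere-vanishing section of $\Omega_{\P^r}(b+a\,a_i)$. Applying this for $i=1$, and using $H^0(\Omega_{\P^r}(m))=0$ for $m\le 1$, forces $b\ge 2$; applying it for $i=s$, I need a nowhere-vanishing section of $\Omega_{\P^r}(c)$ with $c:=b+a\,a_s\ge 3$. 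A direct Chern-class computation from the Euler sequence, using $c(\Omega_{\P^r})=(1-h)^{r+1}$, produces
$$
c_r\bigl(\Omega_{\P^r}(c)\bigr)=\frac{(c-1)^{r+1}+(-1)^r}{c}\,h^r,
$$
which is strictly positive for every integer $c\ge 3$. Hence no nowhere-vanishing section exists, and this is the contradiction.

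The main subtlety concerns the borderline case $c=2$ with $r$ odd, in which $c_r(\Omega_{\P^r}(2))=0$ and the null-correlation construction actually produces genuine nowhere-vanishing sections of $\Omega_{\P^r}(2)$. The non-triviality hypothesis on $\cE$ is precisely what rules this case out: together with $a\ge 1$, $b\ge 2$, and some $a_s\ge 1$, it pushes $c=b+a\,a_s$ past $2$, so that the Chern-class obstruction engages.
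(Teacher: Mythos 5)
Your proof is correct, but it takes a genuinely different route from the paper's. The paper disposes of the lemma in three lines: composing a morphism $\phi:\P(\cE)\to\P(T_{\P^r})$ with the second projection $p_2:\P(T_{\P^r})\to{\P^r}^\vee$ gives a fiber type contraction of $\P(\cE)$, and Lemma \ref{lem:diag} (the projectivization of a non-trivial decomposable bundle over a Picard number one base admits no second fiber type contraction) forces $p_2\circ\phi$ to factor through $p$; since $(p_1,p_2)$ embeds $\P(T_{\P^r})$ into $\P^r\times{\P^r}^\vee$, the map $\phi$ is then relatively constant. You instead unpack $\phi$ via the universal property into a line bundle quotient $p^*T_{\P^r}\twoheadrightarrow\xi^a\otimes p^*\cO(b)$, evaluate it on the distinguished sections $\sigma_i$ of the split bundle (correctly using $\sigma_i^*\xi=\cO(a_i)$ in the Grothendieck convention), and obstruct $a\ge 1$ by combining $H^0(\Omega_{\P^r}(m))=0$ for $m\le 1$ with the positivity of $c_r\bigl(\Omega_{\P^r}(c)\bigr)=\frac{(c-1)^{r+1}+(-1)^r}{c}\,h^r$ for $c\ge 3$; both the formula and the logic forcing $c=b+a\,a_s\ge 3$ check out, and your identification of the null-correlation case $c=2$, $r$ odd, as the only genuine borderline is exactly right --- it is the same symplectic phenomenon that the paper confronts elsewhere via Proposition \ref{prop:nest}. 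What each approach buys: the paper's argument is shorter and recycles Lemma \ref{lem:diag}, exploiting the special geometry of $\P(T_{\P^r})$ as a flag variety with a second projection; yours is self-contained and purely cohomological, and it makes explicit where the non-triviality hypothesis and the specific bundle $T_{\P^r}$ enter. One shared caveat: when you normalize $a_1=0$ and say non-triviality yields some $a_s\ge 1$, you are implicitly reading ``non trivial'' as ``not a twist of the trivial bundle'' (for $\cE=\cO_{\P^r}(1)^{\oplus s}$, which is non-trivial as stated, your normalized bundle is trivial and the argument stalls); but the paper's own proof needs the identical reading, since for such $\cE$ the projectivization is a trivial bundle and Lemma \ref{lem:diag} gives nothing, so this is a defect of the statement's phrasing rather than of your argument.
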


\begin{proof} Assume that $\phi:\P(\cE) \to \P(T_{\P^r})$ is such a morphism. Composing with the second projection $p_2:\P(T_{\P^r}) \to {\P^r}^\vee$ we would obtain that $\P(\cE)$ has a fiber type contraction onto a subset of ${\P^r}^\vee$. By Lemma \ref{lem:diag}, the composition $p_2 \circ \phi\colon \P(\cE)\to {\P^r}^\vee$ factors through the natural projection $\P(\cE) \to \P^r$.
\end{proof}

As a consequence of Lemma \ref{lem:diag}, we obtain the following statement, that allows us to reduce the proof of Theorem \ref{thm:main} to the case in which the two projective bundle structures are not diagonalizable:

\begin{proposition}\label{prop:product}
In the above setting, the bundles $\cY^\pm\to X_{\pm}$ are not diagonalizable, unless $Y=\P^{r_+}\times \P^{r_-}$.
\end{proposition}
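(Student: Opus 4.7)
My plan is to argue by symmetry and reduce to a direct product, then identify both factors via a Picard-rank argument. Assume that the flag bundle $\cY^+\to X_+$ is diagonalizable; I will show this forces $Y\cong\P^{r_+}\times\P^{r_-}$ (the other case is identical after swapping the roles of $+$ and $-$). First I would note that $X_+$ has Picard number one: since $p_+:Y\to X_+$ is a projective bundle, the pullback $p_+^*\colon\Pic(X_+)\hookrightarrow\Pic(Y)$ is injective with cokernel generated by the tautological class, and combined with $\rho_Y=2$ this forces $\rho_{X_+}=1$. Hence $\Pic(X_+)$, modulo torsion, is generated by a single ample class $\cO_{X_+}(1)$.

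Next I would unpack diagonalizability into a concrete splitting. The Cartan subgroup $H$ of $G=\PGL(r_++1)$ is the image of the diagonal torus, and a cocycle in $\HH^1(X_+,H)$ corresponds, up to a common twist, to an $(r_++1)$-tuple of line bundles on $X_+$. Via the natural action of $H$ on $G/P\cong\P^{r_+}$, this data yields exactly the bundle $\P(\bigoplus_{i=0}^{r_+} L_i)$; so the hypothesis that $\cY^+\to X_+$ is diagonalizable translates into $Y\cong\P(\cE)$ with $\cE\cong\bigoplus_{i=0}^{r_+}\cO_{X_+}(a_i)$. Thus $p_+$ is a diagonalizable $\P^{r_+}$-bundle over the Picard-rank-one base $X_+$, and it admits a second fiber type contraction, namely $p_-$; Lemma \ref{lem:diag} then forces $p_+$ to be trivial, so $Y\cong X_+\times\P^{r_+}$.

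To finish, I would identify both factors with projective spaces. Since $\rho_{X_+}=1$, the cone $\cNE{Y}$ has exactly two extremal rays: one contracted by the first projection to $X_+$ (which coincides with $p_+$), and one generated by a minimal rational curve of $X_+$ inside a slice $X_+\times\{\text{pt}\}$, contracted by the second projection to $\P^{r_+}$. As $p_-$ is the second fiber type ray contraction of $Y$, it must agree with the second projection, so $X_-\cong\P^{r_+}$ and the fibers of $p_-$ are isomorphic to $X_+$. But these fibers are also copies of $\P^{r_-}$, so $X_+\cong\P^{r_-}$ and $Y\cong\P^{r_-}\times\P^{r_+}$, as desired.

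I do not foresee a serious obstacle. The whole argument is driven by Lemma \ref{lem:diag}, once diagonalizability of the complete flag bundle is translated into a splitting of the associated $\P^{r_+}$-bundle. The only mildly delicate point is this translation, i.e.\ verifying that Definition \ref{def:diag} is really equivalent to $Y\cong\P(\bigoplus L_i)$ in our setting; this is standard and follows from the identification of the maximal torus of $\PGL(r_++1)$ recalled above.
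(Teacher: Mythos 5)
Your proof is correct and takes essentially the same route as the paper's: diagonalizability turns $p_\pm$ into $\P(\bigoplus_i\cO(a_i))$ over a Picard-number-one base, Lemma \ref{lem:diag} applied to the second fiber type contraction forces the bundle to be trivial, and then the product structure together with $\rho_{X_\pm}=1$ identifies both factors as projective spaces. The only difference is that you make explicit the standard translation from Definition \ref{def:diag} (a cocycle reducing to the maximal torus of $\PGL(r_++1)$) to the splitting of the associated $\P^{r_+}$-bundle, a step the paper leaves implicit when invoking Lemma \ref{lem:diag}.
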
 

\begin{proof}
If, for instance, $\cY^-\to X_{-}$ is diagonalizable, then the existence of the (fiber type) contraction $Y\to X_+$ implies, by Lemma \ref{lem:diag}, that $Y\simeq\P^{r_-}\times X_-$. Moreover, since $X_-$ has Picard number one, the fact that $Y\simeq\P^{r_-}\times X_-$ is a $\P^{r_+}$-bundle over $X_+$ tells us that $Y\simeq\P^{r_-}\times\P^{r_+}$.
\end{proof}

\begin{lemma}\label{lem:lifting0}
Let $p:Y \to X$ be a $\P^r$-bundle. Let $\pi:\cY \to Y$ be a $G/B$-bundle, whose tag is trivial on the lines in the fibers of $p$. Then $p$ lifts to $\cY$, i.e.,  there exists a smooth variety $\ol{X}$ fitting in a commutative diagram
$$ \xymatrix@R=18pt{
\cY \ar[d]_\pi \ar[r]^{\overline p}&\ol{X}\ar[d]\\
Y \ar[r]_p & X}$$
such that $\overline p$ is a $\P^{r}$-bundle.
\end{lemma}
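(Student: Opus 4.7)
The plan is to show that the $G/B$-bundle $\pi\colon\cY\to Y$ is the pullback along $p$ of some $G/B$-bundle $\overline{X}\to X$. Granted this, $\cY\simeq Y\times_X\overline{X}$, and the second projection $\overline{p}\colon\cY\to\overline{X}$ is the base change of the $\P^r$-bundle $p$, hence itself a $\P^r$-bundle with $\overline{X}$ smooth, as required.

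The first step is fiberwise triviality. For each fiber $F\simeq\P^r$ of $p$, Theorem \ref{thm:trivial} applies with the unsplit dominating family of lines in $F$: the tag of $\cY|_F$ on every such line is zero by hypothesis, so $\cY|_F\simeq F\times G/B$. Equivalently, writing $\cY=E\times^G G/B$ for a principal $G$-bundle $E\to Y$, the restriction $E|_F$ is trivial for every fiber $F$ of $p$.

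The main step is descent: one must produce a principal $G$-bundle $E'\to X$ with $E\simeq p^*E'$, from which $\overline{X}:=E'\times^G G/B$ will work. Locally on $X$, one picks a section $\sigma\colon U\to Y|_U$ of $p$ and sets $E'|_U:=\sigma^*E$; the fiberwise trivializations from the previous step pin down a canonical isomorphism $E|_{p^{-1}(U)}\simeq p^*E'|_U$. Different choices of sections produce $G$-valued transition functions on the $\P^r$-fibers, which are constants by properness of $\P^r$, so the local $E'|_U$'s glue into a global $E'\to X$. A more elementary alternative, better suited to the combinatorial framework of the paper, is to factor $\pi$ into a tower of elementary ($\P^1$-bundle) contractions, observe via Lemma \ref{lem:subtag} that the hypothesis on tags propagates to each intermediate bundle, and descend one $\P^1$-bundle at a time via the see-saw principle.

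The main obstacle is precisely this descent step. While the fiberwise triviality follows immediately from Theorem \ref{thm:trivial}, converting it into a genuine pullback structure requires either nonabelian cohomological bookkeeping (in the principal bundle approach) or careful tracking of twists by pullbacks from $X$ (in the inductive $\P^1$-bundle approach). Once the descent is in place, the conclusion is formal: $\cY=p^*\overline{X}=Y\times_X\overline{X}$, and the second projection gives the desired lift of $p$.
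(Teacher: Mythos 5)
Your main route is correct, and its second step is genuinely different from the paper's. The paper opens exactly as you do: by Theorem \ref{thm:trivial}, $\pi^{-1}(F)\simeq F\times G/B$ for every fiber $F$ of $p$, so $p\circ\pi$ is a $(G/B\times\P^r)$-bundle over $X$. But then it simply asserts that the fiberwise projections $p_2:G/B\times\P^r\to G/B$ glue into a contraction $\ol{p}:\cY\to\ol{X}$ coinciding with $p_2$ over each point of $X$, and stops there. You instead descend the defining principal $G$-bundle $E\to Y$ along $p$: since $E$ is trivial on each fiber and any morphism from the proper connected fiber $\P^r$ to the affine group $G$ is constant, the fiberwise sections of $E$ form a principal $G$-bundle $E'\to X$ with $E\simeq p^*E'$, so $\cY\simeq Y\times_X\ol{X}$ with $\ol{X}=E'\times^G G/B$ and $\ol{p}$ the Cartesian base change of $p$. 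This buys two things: it makes explicit the rigidity (constancy of maps $\P^r\to G$) that the paper's phrase ``extends to a contraction'' leaves implicit, and it yields the stronger conclusions that the square is Cartesian and that $\ol{X}\to X$ is a $G/B$-bundle --- facts the paper uses later (e.g.\ in the proof of Corollary \ref{cor:flag}, where the fibers of $\cY^+\to X_-$ are identified with $\P^{r_-}\times F$) without comment. One hygienic point: a $\P^r$-bundle need not admit Zariski-local sections, so either work \'etale-locally (your constant transition functions descend just as well) or, cleaner, define $E'$ directly as the relative space of sections of $E$ over the fibers of $p$, which removes all choices of $\sigma$.

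Your proposed ``more elementary alternative,'' however, fails as stated: a $G/B$-bundle cannot in general be factored into a tower of $\P^1$-bundles over $Y$. Only the top elementary contractions $\cY\to\cY(\Delta\setminus\{i\})$ are $\P^1$-bundles; a subsequent step $\cY(I')\to\cY(I)$ has fibers which are rational homogeneous spaces of the subdiagram supported on the unmarked nodes, and these are $\P^1$ only when the node being unmarked is isolated there. Already for $G$ of type $\DA_2$ the tower $\cY\to\cY(\{1\})\to Y$ consists of a $\P^1$-bundle followed by a $\P^2$-bundle, and in other types quadric or Grassmannian fibers appear, so the see-saw induction on $\P^1$-bundles cannot be run. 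Since this was offered only as an aside to your principal-bundle argument, it does not affect the correctness of the proposal, but it should be deleted or reworked.
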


\begin{proof}
Given a fiber $\P^r$ of $p$, since the tag of the flag bundle $\pi^{-1}(\P^r)\to \P^r$ on the lines of $\P^r$ is equal to zero, by Theorem \ref{thm:trivial} we obtain that $\pi^{-1}(\P^r)\simeq G/B\times\P^r$, so $p\circ\pi$ is a $(G/B\times \P^r)$-bundle. Then the projection $p_2:G/B\times \P^r\to G/B$ extends to a contraction $\ol{p}:\cY\to\ol{X}$, that coincides with $p_2$ fiberwise over $X$. 
\end{proof}

\begin{proposition}\label{prop:diag2}
Let $Y$ be a smooth projective variety supporting two mutually uniform projective bundle structures. With the notation of Section \ref{set:main}, assume further that $r_-=1$. Then, either
\begin{enumerate}
\item  $\delta_+=(d, 0,\dots, 0)$ for some $d \ge 0$, or
\item $\delta_+=(d, 0,\dots,0,d)$ for some $d >0$, $Y\to X_+$ is not diagonalizable, $r_+$ is odd, $p_+$ reduces to  $\PSP(r_++1)$ and the associated flag bundle of type $\DC_{(r_++1)/2}$ is uniform with respect to $\cM_-$, with tag $(d,0,\dots,0)$. 
\end{enumerate}
\end{proposition}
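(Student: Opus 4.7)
The plan is to leverage the rigidity of nestings established in Proposition \ref{prop:nest}, exploiting the fact that when $r_-=1$ the universal family $\cU_-$ coincides with $Y$ itself and $\ev_-=\id_Y$. Consequently the morphism $s_{0-}:Y\to \cY^+(N_+)$ from the diagram in Section \ref{set:main} (and, when applicable, the morphism $f$ produced by Lemma \ref{lem:YI}) becomes a section over $Y$, hence a nesting of the $\DA_{r_+}$-bundle $\cY^+\to X_+$; Proposition \ref{prop:nest} then drastically restricts the admissible tags. I split into cases according to whether $d_1:=\delta_+(1)$ vanishes.

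First suppose $d_1>0$, so $1\in N_+$ and $\cY^+(N_+)\to X_+$ factors through $Y=\cY^+(\{1\})$; hence $s_{0-}$ is a section of $\cY^+(N_+)\to Y$. If $N_+=\{1\}$, only the first entry of $\delta_+$ is nonzero and we obtain $\delta_+=(d,0,\dots,0)$ with $d=d_1>0$, which is case (1). Otherwise, $s_{0-}$ is a nesting of type $(\cY^+,\{1\},N_+\setminus\{1\})$ with disjoint nonempty sets; Proposition \ref{prop:nest} then forces $r_+$ odd, $N_+=\{1,r_+\}$, and $d_i=d_{r_++1-i}$, so $\delta_+=(d,0,\dots,0,d)$ with $d=d_1>0$. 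The same Proposition supplies the reduction of $p_+$ to $\PSP(r_++1)$ and the tag $(d,0,\dots,0)$ of the associated $\DC_{(r_++1)/2}$-bundle, covering all of case (2) except non-diagonalizability.

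Now suppose $d_1=0$, and apply Lemma \ref{lem:YI} with $I:=N_+\cup\{1\}$, obtaining a section $f:Y\to\cY^+(I)$ of $\cY^+(I)\to Y$. If $N_+=\emptyset$ then $I=\{1\}$, $f$ is the identity, and $\delta_+$ vanishes identically, which is case (1) with $d=0$. Otherwise $f$ is a nesting of type $(\cY^+,\{1\},N_+)$ with disjoint nonempty sets; Proposition \ref{prop:nest} forces $N_+\subseteq\{1,r_+\}$, hence $N_+=\{r_+\}$, together with the symmetry $d_1=d_{r_+}$. This yields $d_{r_+}=0$, contradicting $r_+\in N_+$, so this subcase is empty.

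Finally, for non-diagonalizability in case (2): if $\cY^+\to X_+$ were diagonalizable, Proposition \ref{prop:product} would give $Y\cong\P^{r_+}\times\P^1$ with $p_+$ the trivial projection onto $X_+=\P^1$, whose associated flag bundle has zero tag, contradicting $d>0$. The one conceptually subtle step is the initial identification of $s_{0-}$ and $f$ as nestings; this is precisely what the hypothesis $r_-=1$ enables, and once it is made the argument reduces to a purely formal application of Proposition \ref{prop:nest}.
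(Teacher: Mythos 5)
Your proposal reproduces the paper's argument in the case $\rho_Y=2$, with one genuine and rather clean variant: in the subcase $\delta_+(1)=0$ the paper shows via Lemma \ref{lem:minimals} that $s_{0-}$ is relatively constant and then concludes through Propositions \ref{prop:diag} and \ref{prop:product} that $Y\simeq\P^1\times\P^{r_+}$ with trivial tag, whereas you use the map $f$ of Lemma \ref{lem:YI} directly as a nesting of type $(\cY^+,\{1\},N_+)$ and let Proposition \ref{prop:nest} force the contradiction $d_{r_+}=d_1=0$ against $r_+\in N_+$. This works, because for $f$ the section property over $Y$ is genuinely automatic: diagram (\ref{diag:Y_I}) commutes with $\ev_-$, and $\ev_-=\id_Y$ when $r_-=1$. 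But in the subcase $\delta_+(1)>0$ your ``hence $s_{0-}$ is a section of $\cY^+(N_+)\to Y$'' is a gap: Lemma \ref{lem:map_sections} constructs $s_{0-}$ only as a morphism over $X_+$ (it satisfies $\pi_{N_+}\circ s_{0-}=p_+$, with no compatibility over $Y$), so the composite $Y\to\cY^+(N_+)\to Y$ is a priori just an endomorphism of $Y$ over $X_+$, not the identity. The paper closes this with the two observations you omit: each fiber $\ell$ of $p_-$ is itself a \emph{minimal} section of $Y\to X_+$ over $p_+(\ell)$ (for $\rho_Y=2$ this follows from extremality of $[\ell]$ in $\NE(Y)$: a section of smaller degree would express $[\ell]$ with a negative coefficient on the other ray), and when $\delta_+(1)>0$ the minimal section over each curve is unique; together these force $s_{0-}|_\ell$ to lie over $\ell$ itself, whence the composite is the identity. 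Your closing remark attributing the nesting identification solely to $r_-=1$ is therefore accurate for $f$ but wrong for $s_{0-}$.

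The larger gap is that your proof silently assumes $\rho_Y=2$ throughout, while the proposition is stated without that hypothesis, and deliberately so: in Corollary \ref{cor:flag} it is applied to $\cY^+$, whose Picard number is $\rho_Y+r-1>2$. For $\rho_Y>2$ the implicit crutches fail: $[\ell]$ need not span an extremal ray (so the minimality of $\ell$, needed both for the section property of $s_{0-}$ and for the tag computation underlying Lemma \ref{lem:YI}, is no longer free), and Proposition \ref{prop:product} together with Lemma \ref{lem:diag}, which you invoke for non-diagonalizability, requires a base of Picard number one. The paper devotes the entire second half of its proof to this case: it takes the proper prerelation generated by $\cM_-$ and $\cM_+$, forms the Koll\'ar quotient $\pi:Y^\circ\to Z^\circ$, restricts $p_\pm$ to a general fiber $F$, which is smooth with $\rho_F=2$ because the families are unsplit, applies the $\rho=2$ argument on $F$ to pin down the tag (uniformity then propagates it to all of $Y$), and finally reruns the nesting argument with $s_{0-}$ on $Y$ itself to obtain the $\PSP(r_++1)$-reduction globally. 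Without this reduction step, or an equivalent, your proof does not establish the statement in the generality in which the paper uses it.
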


\begin{proof}
Assume first that $\rho_Y=2$.
Since $r_-=1$ then $\cU_-=Y$ and $\cM_-=X_-$; in particular
 we have a  map $s_{0-}:Y \to \cY^+(N_+)$ over $X_+$. 
 
The map $s_{0-}$ sends a fiber $\ell$ of $Y\to X_-$ to a minimal section of $\cY^+(N_+)\to X_+$ over $p_+(\ell)$; the fiber $\ell$ is a minimal section of $Y\to X_+$ over $p_+(\ell)$, so $s_{0-}$ sends minimal sections (over curves of the family $\cM_-$) to minimal sections. 

If $\delta_+(1)=0$ we are in the hypothesis of  Lemma \ref{lem:minimals}, and $s_{0-}$ is relatively constant over the curves of  $\cM_-$. Since $\cM_-$ is a dominating family,  $s_{0-}:Y\to \mathcal Y^+(N_+)$ is relatively constant over $X_+$, thus $s_{0-}$ factors via a section $s'_{0-}:X_+\to \cY^+(N_+)$. Applying Proposition \ref{prop:diag}, we obtain that $Y\to X_+$ is diagonalizable, so  $Y \simeq \P^1 \times \P^{r_+}$ by Proposition \ref{prop:product} and the tag $\delta_+$ is trivial.

In this case in which $\delta_+(1)>0$ we have a map  $\cY^+(N_+)\to Y$, for which $s_{0-}$ is a section. If $N_+=\{1\}$ we are in case (1); otherwise the map $\cY^+(N_+)\to Y$ is not an isomorphism, and $s_{0-}$ gives a nesting $(\cY^+,1,N_+\setminus\{1\})$.  
Using Proposition \ref{prop:nest}, we get the case (2). 

Assume now that $\rho_Y >2$.
The families $\cM_-$, $\cM_+$ define a proper prerelation in the sense of \cite[Definition IV.4.6]{kollar}; 
to these prerelations one can associate a proper proalgebraic relation $\textrm{Chain}(\cU_-,\cU_+)$ (see \cite[Theorem IV.4.8]{kollar}); we denote by $\langle \cU_-, \cU_+ \rangle$ the set-theoretic relation associated with  $\textrm{Chain}(\cU_-,\cU_+)$ .
By \cite[Theorem IV.4.6]{kollar} there exists an open subvariety $Y^\circ \subset Y$ and a proper morphism with connected fibers $\pi:Y^\circ \to Z^\circ$ whose fibers are equivalence classes of $\langle \cU_-, \cU_+ \rangle$.
In particular a general fiber $F$ of $\pi$ is smooth and $\rho_F=2$, since $\cM_\pm$ are unsplit families (see, for instance \cite[Proposition 3]{NO2}).
The restrictions of $p_\pm$ to $F$ are $\P^{r_\pm}$-bundles, and we apply the first part of the proof to get that either $\delta_+=(d,0,\dots,0)$, $d\geq 0$, or $\delta_+=(d,0,\dots,0,d)$, with $d>0$. In the second case we have a contraction $\cY^+(N_+)\to Y$, for which the map $s_{0-}:Y\to \cY^+(N_+)$ is a section, and we conclude, as in the case $\rho_Y=2$, by Proposition \ref{prop:nest}.
\end{proof}

\begin{corollary}\label{cor:flag}
 Let $\cY^+$ be a smooth projective variety  which admits a projective bundle structure $\ol{p}:\cY^+ \to \ol{X}$ and a flag bundle structure $\pi_+:\cY^+ \to X_+$ of type $\cD_+$ on a smooth variety $X_+$ of Picard number one such that $\ol{p}$ is uniform with respect to the families of fibers of the elementary contractions of $\cY^+$ factoring $\pi_+$: 
$$
\xymatrix@R=8pt{
&\cY^+\ar[ld]_{\ol{p}}\ar[rd]^{\pi_+}&\\\ol{X}&&X_+}
$$
Then $\cY^+$ is a rational homogeneous variety.
\end{corollary}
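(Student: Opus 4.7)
The strategy is to reduce the statement to \cite[Theorem A.1]{OSW}, which identifies flag manifolds as those smooth projective varieties whose Picard number equals the number of their elementary $\P^1$-bundle contractions. Since $X_+$ has Picard number one and $\pi_+$ is a flag bundle of type $\cD_+$ with $n:=|\Delta_+|$ nodes, we have $\rho(\cY^+)=n+1$. The $n$ elementary contractions factoring $\pi_+$ are $\P^1$-bundles, and $\ol{p}$ is the remaining elementary contraction of $\cY^+$. If the relative dimension $r$ of $\ol{p}$ is one, \cite[Theorem A.1]{OSW} applies directly and gives homogeneity.

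When $r\geq 2$, the plan is to pass to the relative complete flag $\sigma:\til{\cY}\to \cY^+$ of $\ol{p}$, which is a flag bundle of type $\DA_{r-1}$. Then $\til{\cY}\to \ol{X}$ is a flag bundle of type $\DA_r$, giving $r$ elementary $\P^1$-bundle contractions, and $\rho(\til{\cY})=n+r$, so that $n$ additional $\P^1$-bundle contractions on $\til{\cY}$ are needed. I would obtain these by lifting each $\rho_i$ via Lemma \ref{lem:lifting0}, whose hypothesis requires the tag of $\sigma$ to be identically zero on the $\rho_i$-fibers. The uniformity of $\ol{p}$ ensures that this tag is \emph{constant} along each family $\cM_i$; upgrading constancy to vanishing is the crux, and my plan is to exploit the Picard-number-one hypothesis on $X_+$ together with the machinery of Lemma \ref{lem:map_sections}, Lemma \ref{lem:minimals} and Proposition \ref{prop:diag2}, in the same spirit as the analysis of the minimal sections $s_{0\pm}$ in Section \ref{set:main}.

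Once the lifting succeeds, $\til{\cY}$ carries $n+r=\rho(\til{\cY})$ many $\P^1$-bundle elementary contractions, and \cite[Theorem A.1]{OSW} yields that $\til{\cY}$ is a flag manifold. Since $\sigma:\til{\cY}\to \cY^+$ is then the contraction of a rational homogeneous variety by a parabolic subgroup, $\cY^+$ inherits rational homogeneity. I expect the main obstacle to be the vanishing-of-tag step: it is \emph{a priori} conceivable that the common value of the tag on each $\cM_i$-family is nonzero, and ruling this out will require a careful use of the Picard-number-one hypothesis on $X_+$ — probably through Proposition \ref{prop:sect_diag} applied to sections produced by Lemma \ref{lem:map_sections} — combined with Theorem \ref{thm:trivial} to pass from vanishing on each family to triviality of $\sigma$ on the fibers of $\pi_+$.
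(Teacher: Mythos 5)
Your skeleton coincides with the paper's own proof: pass to the relative complete flag $\ol{\cY}\to\ol{X}$ of $\ol{p}$ (type $\DA_r$), note $\rho_{\ol{\cY}}=\rho_{\cY^+}+r-1$, lift the elementary contractions $\rho_j$ of $\pi_+$ to $\P^1$-bundle structures on $\ol{\cY}$ via Lemma \ref{lem:lifting0}, and conclude with \cite[Theorem A.1]{OSW}. The gap lies precisely in the step you identify as the crux: you plan to upgrade constancy of the tag to \emph{vanishing} of the tag of $\ol{\cY}\to\cY^+$ on the fibers $\Gamma_j$ of each $\rho_j$, and you expect a nonzero common value to be ``ruled out'' via the Picard-number-one hypothesis. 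That cannot work, because the vanishing is false in general. Applying Proposition \ref{prop:diag2} to the pair $(\ol{p},\rho_j)$ --- exactly as the paper does --- gives a genuine dichotomy for the tag $\delta_j$ of $\ol{\cY}\to\ol{X}$ on $\Gamma_j$: either $\delta_j=(d_j,0,\dots,0)$, in which case its restriction to the subdiagram indexing $\ol{\cY}\to\cY^+$ vanishes (Lemma \ref{lem:subtag}) and your lifting goes through; or $\delta_j=(d_j,0,\dots,0,d_j)$ with $d_j>0$, in which case that restriction is $(0,\dots,0,d_j)\neq 0$, Lemma \ref{lem:lifting0} does not apply to $\ol{\cY}$, and no Picard-one argument removes the obstruction --- this symmetric case genuinely occurs, being the source of the symplectic entries such as $\DC_n(r,r+1)$ in the list of Remark \ref{rem:list}.

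The paper's treatment of the second case is not exclusion but \emph{reduction}: by Proposition \ref{prop:diag2}(2), $\ol{p}$ reduces to $\PSP(r+1)$, so $r=2k-1$ and $\cY^+$ is a quotient of a flag bundle $\ol{\cY}_{\DC}$ of type $\DC_k$ over $\ol{X}$; by \cite[Proposition 4.11]{MOS7} all the other tags $\delta_j$ are then symmetric as well, and the tag of $\ol{\cY}_{\DC}\to\ol{X}$ on $\Gamma_j$ is $(d_j,0,\dots,0)$. The restriction of this $\DC$-tag to the $\DC_{k-1}$-bundle $\ol{\cY}_{\DC}\to\cY^+$ is identically zero, so Lemma \ref{lem:lifting0} applies to $\ol{\cY}_{\DC}$ instead of $\ol{\cY}$; counting independent $\P^1$-bundle structures (one checks that $\Nu(\ol{\cY}_{\DC}|\ol{X})$ meets the span of the lifted rays trivially --- a routine point your sketch also leaves implicit) one invokes \cite[Theorem A.1]{OSW} for $\ol{\cY}_{\DC}$, and $\cY^+$ is rational homogeneous as the image of one of its contractions. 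To repair your proposal, replace the ``rule out nonvanishing'' step by this case split: lift on the $\DA$-flag bundle only when every $\delta_j$ is of the first kind, and otherwise pass to the symplectic reduction before lifting.
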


\begin{proof} We will prove this by showing that $\cY^+$ is the image of a contraction of a complete flag manifold.
Let $r$ be the dimension of the fibers of $\ol{p}$, and let $\ol{\pi}:\ol{\cY}\to \ol{X}$ be the complete flag bundle, of type $\DA_{r}$, associated to $\ol{p}$. We have that:
$$
\rho_{\ol{\cY}}=\rho_{\cY^+}+r-1.
$$
By construction, the variety $\ol{\cY}$ has $r$  elementary contractions $\ol{\rho}_j:\ol{\cY}\to \ol{\cY}(j^c)$ over $\ol{X}$ which are $\P^1$-bundles. On the other hand, for each elementary contraction $\rho_j:\cY^+\to \cY^+(j^c)$ over $X_+$ we may apply Proposition \ref{prop:diag2} to 
$$
\xymatrix@R=8pt{&\cY^+\ar[ld]_{\ol{p}}\ar[rd]^{\rho_j}&\\
\ol{X}&&\cY^+(j^c)}
$$ in order to obtain the possible tags $\delta_j$ of $\ol{\cY}\to \ol{X}$ on the fibers $\Gamma_j$ of $\rho_j$. 

If the tag $\delta_j$ is equal to $(d_j, 0, \dots, 0)$, with $d_j\geq 0$, then, by Lemma \ref{lem:subtag}, the tag of $\ol{\cY}\to \cY^+$ on $\Gamma_j$ is equal to $0$, and Lemma \ref{lem:lifting0} tells us that $\rho_j$ can be lifted to $\ol{\cY}$. If this holds for every $j$, then the liftings of the $\rho_j$'s are $\rho_{\cY^+}-1$ $\P^1$-bundles structures on $\ol{\cY}$, whose associated rays $R_j$ in the Mori cone of $\ol{\cY}$ are independent. By construction, the intersection of the kernel $\Nu(\ol{\cY}|\ol{X})$ of the induced map $\Nu(\ol{\cY})\to\Nu(\ol{X})$ with the space generated by the rays $R_j$ is equal to zero. It follows that the $(\rho_{\cY^+}+r-1)$ $\P^1$-bundle structures that we found are given by linearly independent classes in $\Nu(\ol{\cY})$. We then conclude that $\ol{\cY}$ is a complete flag manifold by \cite[Theorem A.1]{OSW}. 

If  for some $i$ we have $\delta_i=(d_i>0, 0, \dots, 0,d_i)$, then, by Proposition \ref{prop:diag2}, $\ol{p}$ reduces to a $\PSP(r+1)$-bundle, and \cite[Proposition 4.11]{MOS7} tells us that all the other $\delta_j$ must be symmetric, hence of the form $(d_j, 0, \dots, 0,d_j)$, with $d_j\geq 0$. In particular
 $\cY^+$ is a quotient of a flag bundle $\ol{\cY}_{\DC}$ of type $\DC_{k}$, with $r=2k-1$. Moreover, Proposition \ref{prop:diag2} tells us also that the tag of $\ol{\cY}_{\DC}\to \ol{X}$ on $\Gamma_j$ equals $(d_j,0,\dots,0)$, for every $j$. Then, by Lemma \ref{lem:subtag}, the tag of $\ol{\cY}_{\DC}\to \cY^+$ (which is a $\DC_{k-1}$-bundle) on the fibers of $p_-$ is $(0,\dots,0)$, and we conclude as in the previous case, by Lemma \ref{lem:lifting0} and
\cite[Theorem A.1]{OSW}.
\end{proof}

%%%%%%%%%%%%%%%%%%%%%%%%%%%%
 
\subsection{Proof of Theorem \ref{thm:main}}\label{ssec:proof}
We will show that $Y$ is the target of a contraction of a complete flag manifold.
The case $\delta_+=0$ easily follows from Theorem \ref{thm:trivial}; let us then assume that $\delta_+$ is not zero. We introduce the following notation:
given  a point $x\in X_+$, let $P_x\simeq \P^{r_+}$ denote its inverse image in $Y$. We denote by $\cU_x:=\cU_-\times_Y P_x$, and by $\cY^x:=\cY^+\times_Y P_x$ the restrictions of $\cU_-$ and $\cY^+$ to $P_x$. 
Note that $\cY^x$ is the complete flag of $\P^{r_+}$, $\cY^x\simeq\DA_{r_+}(1,\dots,r_+)$. 

\medskip

\noindent{\em Step $1$: If  $\,\cU_x \to P_x$ is diagonalizable for some $x \in X^+$, then either $\delta_+=(d >0, 0, \dots, 0)$, or $r_+$ is odd, $\delta_+=(d >0, 0, \dots, 0, d)$, and $Y$ is a quotient of a flag bundle $\cY^+_{\DC}$ of type $\DC_{(r_++1)/2}$, uniform with respect to $\cM_-$, with tag $(d>0,0,\dots,0)$.}

\medskip

Assume first that $\delta_+(1)=0$ and set $I:=N_+\cup\{1\}$. By Lemma \ref{lem:YI} there exists a morphism $f: \cU_- \to \cY^+(I)$ fitting in a commutative diagram
$$
\xymatrix@R=8pt{\cU_-\ar[rr]^{f}\ar[rd]_{\ev_-}&&\cY^+(I)\ar[ld]\\&Y&}
$$
We now consider the restriction of $f$ to $\cU_x$, which gives a morphism $\cU_x\to \cY^x(I)$, that we denote also by $f$. Since, by hypothesis, $\cU_x\to P_x$ is diagonalizable, then, by Proposition \ref{prop:sect_diag}, it has a section $s:P_x\to \cU_x$; composing it with $f:\cU_x\to \cY^x(I)$, we get a section of the contraction $\cY^x(I)\to P_x$. Since $\delta_+\neq 0$, then $N_+\neq\emptyset$, 
so $\cY^x_I\to P_x$ is not an isomorphism, and the section $f\circ s:P_x\to \cY^x(I)$ is a nesting. Then, by Proposition \ref{prop:nest}, $I=\{1,r_+\}$, $\cY^x(I)=\P(T_{P_x})$,  
and Lemma \ref{lem:splittangent} tells us that $f:\cU_x\to \cY^x(I)$ factors via the projection $\cU_x\to P_x$. In particular $f:\cU_-\to \cY^+(I)$ contracts the fibers of $\ev_-:\cU_-\to Y$, and so it factors through it, and we obtain a section $\sigma:Y\to \cY^+(I)$ of the projection $\cY^+(I)\to Y$. We may then apply Proposition \ref{prop:nest} to claim that the tag $\delta_+$ is symmetric, contradicting that $I=\{1,r_+\}$. We have thus shown that $\delta_+(1) \not = 0$. 

Assume that there exists another nonzero element in the tag, i.e. that $N_+\neq \{1\}$, 
and consider the map $s_{0-}$ provided by Lemma \ref{lem:map_sections}, fitting in the diagram
$$
\xymatrix@R=8pt{\cU_-\ar[rr]^{s_{0-}}\ar[rd]_{\ev_-}&&\cY^+(N_+)\ar[ld]\\&Y&}
$$
Restricting the diagram to $P_x$ and arguing as in the case $\delta_+(1)= 0$, we get a section of $\cY^+(N_+) \times_YP_x \to P_x$. Arguing as in the previous case, we get a section $Y\to \cY^+(N_+)$, hence a nesting $(\cY^+,1,r_+)$, and the statement follows again by Proposition \ref{prop:nest}.
\medskip

\noindent{\em Step $2$: If  $\,\cU_x \to P_x$ is diagonalizable for some $x$, then $Y$ is rational homogeneous.}

\medskip

By the previous step, replacing $\cY^+$ with $\cY^+_{\DC}$ if necessary, we may assume that $\delta_+=(d, 0, \dots, 0)$ for some integer $d>0$. In particular the tag of $\cY^+\to Y$ over the curves of $\cM^-$ is, by  Lemma \ref{lem:subtag}, equal to zero. In particular the projection $p_-:Y\to X_-$ lifts to $\cY^+$ by Lemma \ref{lem:lifting0}:
$$
\xymatrix@R=20pt{\ol{X}\ar[d]&\cY^+\ar[l]_{\ol{p}_-}\ar[rd]\ar[d]&\\X_-&Y\ar[l]^{p_-}\ar[r]_{p_+}&X_+}
$$

We claim that $\ol{p}_-$ is uniform with respect to the families of fibers of the elementary contractions $\rho_j$ of $\cY^+$ over $X_+$.  By construction the fibers of $\cY^+\to X_-$ are isomorphic to $\P^{r-}\times F$, where $F$ denotes a fiber of $\cY^+\to Y$, hence $\ol{p}_-$ is trivial on the fibers of $\rho_j$ for $j=2, \dots, r_+$. On the other hand the tag of $\ol{p}_-$ on the fibers of $\rho_1$ is the tag of $p_-$ on the images of those curve in $X_-$, and the claim follows from the fact that $p_-$ is uniform with respect to $\cM_+$ by assumption.
Now the homogeneity of $\cY^+$ follows by Corollary \ref{cor:flag}.

\medskip

\noindent{\em Step $3$: Case $r_- \neq r_+$.}

\medskip

Without loss of generality, assume that $r_- < r_+$. For every $x$, $\cU_x$ is a $\P^{r_--1}$-bundle on $P_x\simeq\P^{r_+}$, uniform with respect to lines, hence  classical results on uniform bundles (cf. \cite{VdV,Sa}) imply that $\cU_x$ is diagonalizable, and we conclude by Step $2$. 

\medskip

\noindent{\em Step $4$: Case $r_- = r_+=:r$.}

\medskip
 
By Step $2$, we are left with the case in which $\cU_x\to P_x$ is non diagonalizable for every $x\in X_+$. In particular, following \cite{EHS}, all these restrictions are isomorphic either to $\P(T_{P_x})$ or to $\P(\Omega_{P_x})$.
We observe that, looking at the composition:

$$
\xymatrix{\cY^-\ar[r]\ar@/^1pc/[rr]& \cU_-\ar[r]& Y}
$$
the bundle $\cY^-\to Y$ is the complete flag bundle of type $\DA_{r-1}$ associated to the $\P^{r-1}$-bundle $\cU_-\to Y$. In particular, $\cY^-\times_YP_x\to P_x$ is the complete flag bundle of type $\DA_{r-1}$ determined by $\P(T_{P_x})$ or by $\P(\Omega_{P_x})$ over $P_x$, that is the complete flag manifold of the projective space $P_x$.

Since this holds for every point $x\in X_+$, we obtain that the fibers of $\cY^-\to X^+$ are isomorphic to $\DA_r(1,\dots,r)$, and we may say that $\cY^-$ is a flag bundle of type $\DA_r$ over $X_+$. In particular, it has $r$ elementary contractions over $X_+$ that are $\P^1$-bundles, and the corresponding rays of the Mori cone $\cNE{\cY^-}$ generate $\Nu(\cY^-|X_+)$. On the other hand, by definition, $\cY^-$ has $r$ elementary contractions over $X_-$ that are $\P^1$-bundles, whose rays generate $\Nu(\cY^-|X_-)$. Since the contractions $\cY^-\to X_-$, and $\cY^-\to X_+$ are different, then at least one of the rays of the elementary contractions of $\cY$ over $X_+$ is not contained in $\Nu(\cY^-|X_-)$. We then deduce that $\cY^-$ has at least $r+1$ independent elementary contractions that are $\P^1$-bundles and, noting that $\rho_{\cY^-}=r+1$ and using \cite[Theorem A.1]{OSW}, we conclude that $\cY^-$ is a complete flag manifold. From this it follows that also $Y$, as target of one of its contractions, is rational homogeneous.

%%%%%%%%%%%%%%%%%%%%%%%%%%%%%%%%%%%%%%%%%%%%%%%%%%%%%%%%%%%%%%%%

\section{Applications}\label{sec:drums}

In this section we will use Theorem \ref{thm:main} to prove that some varieties admitting a $\C^*$-action are rational homogeneous. We first recall some preliminaries on $\C^*$-actions, see \cite[$\S$2.3]{ORSW}, \cite[$\S$1.B, $\S$1.C]{RW} for further details. Let $(Z,L)$ be a polarized pair, i.e., $Z$ is a  smooth complex projective variety and $L$ is an ample line bundle on $Z$. Assuming that such a polarized pair admits a non trivial $\C^*$-action, we may associate to every fixed point component an integer. To this end, we take a linearization $\mu\colon \C^*\times L\to L$ so that for every fixed point component $X$ we get $\mu(X)\in \Hom{(\C^*, \C^*)}\simeq \Z$. In this way, denoting by $\cX$  the set of the irreducible fixed point components, we obtain a map $\cX\to \Z$, which by abuse we continue to denote by $\mu$.

\begin{definition}\label{def:bandwidth} Let $(Z,L)$ be a polarized pair with a $\C^*$-action and a linearization $\mu$ on $L$. The {\em bandwidth} of the action  is defined as $|\mu|:=\mu_{\max}-\mu_{\min}$, where $\mu_{\max}$ and $\mu_{\min}$ denote the maximal and minimal value of the function $\mu$.  
 \end{definition}
 
We will call {\em sink} the unique fixed point component $X_{-}$ such that $\mu{(X_-)}=\mu_{\min}$, and {\em source} the unique fixed point component $X_{+}$ such that $\mu{(X_+)}=\mu_{\max}$. Varieties with small bandwidth have been studied in \cite{RW} applying adjunction theory,  and in \cite{ORSW} using tools from birational and projective geometry. In what follows, we focus on bandwidth one varieties (see \cite[$\S$4]{ORSW}) and we use their relation with  special varieties, called drums, which we now define:

\begin{definition}\label{def:drum} Let $Y$ be a normal projective variety with $\rho_Y=2$ and two elementary contractions: 
$$
\xymatrix@R=8pt{&Y\ar[dl]_{p_{-}}\ar[dr]^{p_{+}}\\X_{-}&&X_{+}}
$$
Let $L_{\pm}\in\Pic(Y)$ be the pullbacks via $p_{\pm}$ of some ample line bundles in $X_\pm$. Then the vector bundle $\cE:=L_{-}\oplus L_+$ is semiample and there exists a contraction $\varphi$ of $\P(\cE)$, with supporting line bundle $\cO_{\P(\cE)}(1)$. The image $\varphi(\P(\cE))$ will be called the {\em drum} associated to the triple $(Y,L_{-},L_{+})$.
\end{definition}

\begin{example}\label{ex:horo_drum}
In the case in which $Y$ is rational homogeneous (so that $Y$ is one of the varieties listed in Remark \ref{rem:list}), the drum constructed upon it is a smooth horospherical variety of Picard number $1$, whose classification can be found in \cite{Pas}. In fact every rational homogeneous variety $\cD(i,j)$ listed in Remark \ref{rem:list} corresponds to a triple $(\cD,\alpha_i,\alpha_j)$ in the list of \cite[Theorem~1.7]{Pas}. Following Pasquier (\cite[Section 1.3]{Pas}), each of these triples determines a horospherical variety, constructed as the closure $Z$ of the $G$-orbit of the point $[v_i+v_j]\in\P((V_{i}\oplus V_{j})^\vee)$; 
here, for $k=i,j$, $V_k$ denotes the irreducible representation associated to the fundamental weight corresponding to the $k$-th node of $\cD$, and $v_k$ the corresponding highest weight vector. Writing $G[v_i+v_j]$ as $G/K$ 
for a certain subgroup $K\subset G$, the variety $G/K$ is a $\C^*$-bundle over $\cD(i,j)=G/P(D\setminus\{i,j\})$, and the torus $P(D\setminus\{i,j\})/K\simeq \C^*$ acts naturally on $Z$. Then, denoting by $\cO_Z(1)$ the tautological line bundle of the embedding $Z\subset \P((V_{i}\oplus V_{j})^\vee)$, one may show that, after quotienting $\C^*$ by a finite subgroup, the action of $\C^*$ on $(Z,\cO_Z(1))$ has bandwidth $1$, with fixed components $Z\cap \P(V_k^\vee)\simeq \cD(k)$, $k=i,j$.
\end{example}

We conclude the paper by proving that Campana--Peternell Conjecture holds for bandwidth one varieties of Picard number one. 

\begin{theorem}\label{thm:drumRH}
Let $(Z,L)$ be a polarized pair with a $\C^*$-action of bandwitdh one. Assume that $\rho_Z=1$ and that the tangent bundle $T_Z$ is nef. Then $Z$ is a rational homogeneous manifold.
\end{theorem}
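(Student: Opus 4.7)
The strategy is to realise $Z$ as a drum over a variety $Y$ with two mutually uniform projective bundle structures and then apply Theorem \ref{thm:main}. For the first step, standard results on $\C^*$-actions guarantee that the sink $X_-$ and the source $X_+$ are smooth, and bandwidth one forces every non-trivial orbit closure to be a rational curve of $L$-degree one meeting $X_-$ and $X_+$ in one point each. Using the analysis of bandwidth one actions in \cite{ORSW}, one takes $Y$ to be the parameter space of these orbit closures: it comes equipped with two natural evaluation morphisms $p_\pm\colon Y\to X_\pm$, which turn out to be projective bundles of relative dimensions $r_\pm$, and $Z$ is recovered as the drum $\varphi(\P(L_-\oplus L_+))$ of Definition \ref{def:drum}. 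The condition $\rho_Z=1$ forces $\rho_Y=2$, so we are in the setup of Section \ref{set:main}.

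The next step is to verify the mutual uniformity of $p_+$ and $p_-$ in the sense of Assumption \ref{ass:mutual}. A line $\ell$ in a fibre of $p_+$ maps, via the drum contraction, to a rational curve in $Z$; since $T_Z$ is nef, its restriction to any such rational curve splits as a sum of non-negative line bundles, and by upper semicontinuity of the Harder--Narasimhan polygon on a connected dominating family this splitting type is constant. Tracing this through the tangent and conormal sequences of the drum contraction $\P(L_-\oplus L_+)\to Z$ together with those of the projective bundle $p_-\colon Y\to X_-$, one concludes that the tag of the flag bundle associated to $p_-$ on the lines of $\cM_+$ is independent of the chosen line, and symmetrically for $p_+$. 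Theorem \ref{thm:main} then implies that $Y$ is rational homogeneous, hence one of the varieties $\cD(i,j)$ listed in Remark \ref{rem:list}.

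Finally, by Example \ref{ex:horo_drum}, the drum $Z$ of such a rational homogeneous $Y$ is a smooth horospherical variety of Picard number one appearing in Pasquier's classification \cite{Pas}. Since $T_Z$ is nef by assumption, Li's theorem \cite{Li} (equivalently, a direct inspection of Pasquier's list, where the non-homogeneous examples are ruled out because their tangent bundle fails to be nef) forces $Z$ to be rational homogeneous. The most delicate step, I expect, is the verification of mutual uniformity: translating the global hypothesis that $T_Z$ is nef into a fibrewise constancy statement for the tags of $p_\pm$ requires careful control over how the normal bundles of $X_\pm\subset Z$, the relative tangent bundles of $p_\pm$, and the contraction locus of the drum interact, and this is where the bulk of the technical work should lie.
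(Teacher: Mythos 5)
Your overall architecture matches the paper's: use the bandwidth-one analysis of \cite{ORSW} to exhibit $Z$ as a drum over a variety $Y$ with two projective bundle structures (in the paper, $Y$ is the common exceptional divisor $Y_-\simeq Y_+=\P(\cN_\pm^\vee)$ of the blow-up $\alpha:Z^\flat\to Z$ along sink and source, which is the same object as your parameter space of orbit closures, by \cite[Theorem 4.6]{ORSW}), verify mutual uniformity, apply Theorem \ref{thm:main}, identify $Z$ as horospherical via Example \ref{ex:horo_drum}, and conclude by \cite{Li}. However, the step you yourself flag as delicate --- mutual uniformity --- is where your argument has a genuine gap. The inference ``$T_Z$ nef, hence non-negative splitting on every rational curve, hence by semicontinuity of the Harder--Narasimhan polygon the splitting type is constant'' is false: semicontinuity only says the splitting can become \emph{more} unbalanced on special members of the family, and nefness bounds the summands below by $0$ while fixing the total degree; this still permits jumps such as a general splitting $\cO(1)\oplus\cO(1)$ degenerating to $\cO(2)\oplus\cO$ on special curves, both of which are nef. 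Nefness of $T_Z$ alone does not yield uniformity, and no amount of chasing tangent and conormal sequences will extract it.

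What closes the gap in the paper is a second, \emph{upper} bound coming from the drum geometry rather than from $T_Z$: writing $Z^\flat=\P_Y(L_-\oplus L_+)$ with sections whose images are $Y_\pm$, one has $Y_\pm=\alpha^*L-\pi^*L_\mp$, so the line bundles $M_\pm:=\alpha^*L-Y_\pm$ are nef; since $M_\pm$ are the tautological classes of $\P(p_\pm^*\cN_\pm^\vee\otimes L_\pm)$, the bundles $p_\pm^*\cN_\pm^\vee\otimes L_\pm$ are nef. Restricting to a line $\ell_-$ in a fiber of $p_-$ and using $L_+\cdot\ell_-=1$, every summand $\cO(b_i)$ of $(p_+^*\cN_+)|_{\ell_-}$ satisfies $b_i\le 1$, while nefness of $\cN_+$ (this is the only place $T_Z$ nef enters) gives $b_i\ge 0$; hence $b_i\in\{0,1\}$ and the number of $1$'s equals $\deg(\cN_+|_{\ell_-})$, a purely numerical quantity independent of $\ell_-$. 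This sandwich between two nefness conditions, not semicontinuity, is what pins down the tag on every line. A smaller omission: your claim that $\rho_Z=1$ forces $\rho_Y=2$ fails when the sink or source is a point or a divisor (then $\rho_Y=1$); the paper disposes of these degenerate cases first, showing $Z\simeq\P^{\dim Z}$ directly, and only then assumes $1\le\dim X_\pm\le\dim Z-2$, deducing $\rho_{X_\pm}=1$ from \cite[Lemma 2.9]{ORSW}.
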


\begin{proof} Let us denote by $X_-$ and $X_+$ the sink and the source of the action and by $\cN_-$ \and $\cN_+$ their normal bundles in $Z$. Let $\alpha:Z^\flat \to Z$ be the blowup of $Z$ along $X_- \sqcup X_+$, with exceptional divisors  $Y_-=\P(\cN_-^\vee)$ and $Y_+=\P(\cN_+^\vee)$.
By \cite[Theorem 4.6]{ORSW} and its proof, it follows that $Z$ is a drum, constructed upon the projections of $Y:=Y_- \simeq Y_+$ onto $X_\pm$. 

If $\dim{X_-}=0$, necessarily $X_+$ is a divisor by the Bend-and-Break lemma. Then we obtain that $Y_+\simeq X_+\simeq\P^{\dim Z-1}$, and so $Z\simeq \P^{\dim Z}$, because it is a variety of Picard number one containing a projective space as an ample divisor.  
On the other hand, if $\codim(X_+,Z)=1$, then $Y_-$ is isomorphic to $X_+$ and, applying \cite[Lemma~2.9]{ORSW}, it has Picard number one; this is only possible if $X_-$ is a point and, subsequently, $Z\simeq \P^{\dim Z}$. Similar arguments hold interchanging sink and source so, summing up, we may assume that $1\leq\dim X_{\pm} \leq\dim Z-2$.

Arguing as in the proof of \cite[Theorem 4.6]{ORSW}, the projections $p_\pm:Y \to X_\pm$ are different projective bundle structures. By \cite[Lemma 2.9]{ORSW} we have $\rho_{X_\pm}=1$, hence $\rho_Y=2$. Moreover, since $Z$ is a drum constructed upon $Y$, $Z^\flat$ is the projectivization over $Y$ of a decomposable rank two bundle, with two sections $s_\pm:Y \to Z^\flat$ whose images are $Y_\pm$. 
 
Setting $L_\pm:=s_\pm^*(\alpha^*L|_{Y_\pm})$ we can write
$Z^\flat=\P_Y(L_-\oplus L_+)$ with  natural projection $\pi\colon Z^\flat\to Y$; by construction the line bundles $L_\pm$ are nef and,  denoting by $\ell_\pm$ a line in a fiber of $p_\pm$, we have $L_- \cdot \ell_- =L_+ \cdot \ell_+=0$. Recalling that 
\begin{equation}\label{eq:Y_+}
Y_+=\alpha^*L-\pi^*L_- \qquad Y_-=\alpha^*L-\pi^*L_+
\end{equation} 
we have $s_+^*(Y_+|_{Y_+})=L_+-L_-$ and $s_-^*(Y_-|_{Y_-})=L_--L_+$. Intersecting with $\ell_\pm$ we get $L_- \cdot \ell_+=L_+ \cdot \ell_-=1$. 
By Equation (\ref{eq:Y_+}) we also see that the line bundles $M_\pm:=\alpha^*L -Y_\pm$ are nef; notice that $M_\pm$ are the tautological line bundles of the  projectivization of the vector bundles $p_{\pm}^*\cN^\vee_\pm \otimes L_\pm$, which are then nef.
On the other hand, by the assumption on $T_Z$, we also have that the normal bundles $\cN_\pm$ are nef.
The two conditions together imply that $p_\pm$ are mutually uniform. In fact, if $\ell_-$ is a line in a fiber of $p_-$, then we can write
$$(p_+^*\cN_+^\vee \otimes L_-)|_{\ell_-} \simeq \sum_{i=1}^{\rk \cN_+} \cO_{\P^1}(a_i)\qquad a_i \ge 0,$$ 
and we obtain that 
$$(p_+^*\cN_+)|_{\ell_-} \simeq \sum_{i=1}^{\rk \cN_+} \cO_{\P^1}(1- a_i),$$
which implies that the possible values of the $a_i$'s are zero and one. Then the number of $\cO(1)$ summands
equals  $\deg(\cN_+|_{\ell_-})$. The same argument can be repeated for $\cN_-$. 

Using Theorem \ref{thm:main} we deduce that $Y$ is one of the rational homogeneous manifolds $\cD(i,j)$ listed in Remark \ref{rem:list}. In particular $Z$ is a horospherical variety (see Example \ref{ex:horo_drum}), and the result follows from \cite[Theorem 1.2]{Li}.
\end{proof}


\begin{thebibliography}{10}

\bibitem{AD}
Carolina Araujo and St\'ephane Druel.
\newblock Characterization of generic projective space bundles and algebraicity
  of foliations.
\newblock {\em Commentarii Mathematici Helvetici}, 94:833--853, 2019.

\bibitem{Ein1}
Lawrence Ein.
\newblock Varieties with small dual varieties, {I}.
\newblock {\em Invent. Math.}, 86(1):63--74, 1986.

\bibitem{EHS}
Georges Elencwajg, Andr\'e Hirschowitz, and Michael Schneider.
\newblock Les fibr\'es uniformes de rang au plus {$n$}\ sur {${\mathbb
  P}_{n}({\mathbb C})$}\ sont ceux qu'on croit.
\newblock In {\em Vector bundles and differential equations ({P}roc. {C}onf.,
  {N}ice, 1979)}, volume~7 of {\em Progr. Math.}, pages 37--63. Birkh\"auser,
  Boston, Mass., 1980.

\bibitem{FuHa}
William Fulton and Joe Harris.
\newblock {\em Representation theory}, volume 129 of {\em Graduate Texts in
  Mathematics: Readings in Mathematics}.
\newblock Springer-Verlag, New York, 1991.

\bibitem{Hum2}
James~E. Humphreys.
\newblock {\em Introduction to {L}ie algebras and representation theory},
  volume~9 of {\em Graduate Texts in Mathematics}.
\newblock Springer-Verlag, New York, 1978.
\newblock Second printing, revised.

\bibitem{Hum3}
James~E. Humphreys.
\newblock {\em Reflection groups and {C}oxeter groups}, volume~29 of {\em
  Cambridge Studies in Advanced Mathematics}.
\newblock Cambridge University Press, Cambridge, 1990.

\bibitem{Kan2}
Akihiro Kanemitsu.
\newblock Mukai pairs and simple $k$-equivalence.
\newblock Preprint arXiv:{\tt 1812.05392}, 2018.

\bibitem{Kan}
Akihiro Kanemitsu.
\newblock Extremal {R}ays and {N}efness of {T}angent {B}undles.
\newblock {\em Michigan Math. J.}, 68(2):301--322, 2019.

\bibitem{kollar}
J{\'a}nos Koll{\'a}r.
\newblock {\em Rational curves on algebraic varieties}, volume~32 of {\em
  Ergebnisse der Mathematik und ihrer Grenzgebiete. 3. Folge. A Series of
  Modern Surveys in Mathematics [Results in Mathematics and Related Areas. 3rd
  Series. A Series of Modern Surveys in Mathematics]}.
\newblock Springer-Verlag, Berlin, 1996.

\bibitem{Li}
Qifeng Li.
\newblock Smooth projective horospherical varieties with nef tangent bundles.
\newblock {\em Indiana Univ. Math. J.}, 66(6):1879--1893, 2017.

\bibitem{MOS7}
Roberto Mu{\~n}oz, Gianluca Occhetta, and Luis~E. Sol{\'a}~Conde.
\newblock Nestings of rational homogeneous varieties.
\newblock Preprint arXiv:{\tt 1905.01057}, to appear in Transformation Groups.

\bibitem{MOS5}
Roberto Mu{\~n}oz, Gianluca Occhetta, and Luis~E. Sol{\'a}~Conde.
\newblock On uniform flag bundles on {F}ano manifolds.
\newblock Preprint arXiv:{\tt 1610.05930}, to appear in Kyoto Journal of
  Mathematics.

\bibitem{NO2}
Carla Novelli and Gianluca Occhetta.
\newblock Rational curves and bounds on the {P}icard number of {F}ano
  manifolds.
\newblock {\em Geom. Dedicata}, 147:207--217, 2010.

\bibitem{ORSW}
Gianluca Occhetta, Eleonora~A. Romano, Luis~E. Sol{\'a}~Conde, and Jaros\l
  aw~A.~Wi\'sniewski.
\newblock Small bandwidth $\mathbb{C}^*$-actions and birational geometry.
\newblock Preprint arXiv:{\tt 1911.12129}, 2019.

\bibitem{OSWW}
Gianluca Occhetta, Luis~E. Sol{\'a}~Conde, Kiwamu Watanabe, and Jaros{\l}aw~A.
  Wi{\'s}niewski.
\newblock {F}ano manifolds whose elementary contractions are smooth $\mathbb
  {P}^1$-fibrations.
\newblock {\em Ann. Sc. Norm. Super. Pisa Cl. Sci.}, XVII(2):573--607, 2017.

\bibitem{OSW}
Gianluca Occhetta, Luis~E. Sol\'a~Conde, and Jaros\l aw~A. Wi\'sniewski.
\newblock Flag bundles on {F}ano manifolds.
\newblock {\em J. Math. Pures Appl. (9)}, 106(4):651--669, 2016.

\bibitem{O}
Giorgio Ottaviani.
\newblock On {C}ayley bundles on the five-dimensional quadric.
\newblock {\em Boll. Un. Mat. Ital. A (7)}, 4(1):87--100, 1990.

\bibitem{Pas}
Boris Pasquier.
\newblock On some smooth projective two-orbit varieties with {P}icard number 1.
\newblock {\em Math. Ann.}, 344(4):963--987, 2009.

\bibitem{RW}
Eleonora~A. Romano and Jaros{\l}aw~A. Wi{\'s}niewski.
\newblock Adjunction for varieties with $\mathbb{C}^*$-action.
\newblock Preprint arXiv:{\tt 1904.01896}, 2019.

\bibitem{Sa}
Ei-ichi Sato.
\newblock Uniform vector bundles on a projective space.
\newblock {\em J. Math. Soc. Japan}, 28(1):123--132, 1976.

\bibitem{VdV}
Antonius {Van de Ven}.
\newblock {On uniform vector bundles.}
\newblock {\em {Math. Ann.}}, 195:245--248, 1972.

\end{thebibliography}
\end{document}